\newtheorem{theorem}{Theorem}[section]
\newtheorem{lemma}[theorem]{Lemma}
\newcommand{\prt}{\partial}
\newcommand{\eps}{\varepsilon}
\newcommand{\PP}{\mathbb{P}}
\newcommand{\E}{\mathbb{E}}
\newcommand{\R}{\mathbb{R}}
\newcommand{\T}{{\mathcal T}}
\newcommand{\M}{{\mathcal M}}
\newcommand{\K}{{\mathcal K}}
\newcommand{\EE}{{\mathcal E}}
\newcommand{\FF}{{\mathcal F}}
\newcommand{\bn}{\mathbf{n}}
\newcommand{\bX}{\mathbf{X}}
\newcommand{\dist}{\operatorname{dist}}
\newcommand{\bv}{\mathbf{v}}
\newcommand{\bu}{\mathbf{u}}
\newcommand{\by}{\mathbf{y}}
\newcommand{\bz}{\mathbf{z}}
\newcommand{\wt}{\widetilde}
\newcommand{\bD}{\mathbf{D}}
\newcommand{\ol}{\overline}
\begin{document}
\begin{frontmatter}

\title{Archimedes' principle for Brownian liquid\thanksref{T1}}
\runtitle{Archimedes' principle}

\begin{aug}
\author[A]{\fnms{Krzysztof} \snm{Burdzy}\corref{}\ead[label=e1]{burdzy@math.washington.edu}},
\author[A]{\fnms{Zhen-Qing} \snm{Chen}\ead[label=e2]{zchen@math.washington.edu}} and
\author[A]{\fnms{Soumik} \snm{Pal}\ead[label=e3]{soumik@math.washington.edu}}
\runauthor{K. Burdzy, Z.-Q. Chen and P. Pal}
\affiliation{University of Washington}
\address[A]{Department of Mathematics\\
University of Washington\\
Box 354350\\
Seattle, Washington 98195\\
USA\\
\printead{e1}\\
\hphantom{E-mail: }\printead*{e2}\\
\hphantom{E-mail: }\printead*{e3}}
\end{aug}

\thankstext{T1}{Supported in part by NSF Grants DMS-09-06743,
DMS-10-07563 and
by Grant N N201 397137, MNiSW, Poland.}

\received{\smonth{5} \syear{2010}}
\revised{\smonth{1} \syear{2011}}

%
\begin{abstract}
We consider a family of hard core objects moving as independent
Brownian motions confined to a vessel by reflection. These are
subject to gravitational forces modeled by drifts. The stationary
distribution for the process has many interesting implications,
including an illustration of the Archimedes' principle. The analysis
rests on constructing reflecting Brownian motion with drift in a
general open connected domain and studying its stationary
distribution. In dimension two we utilize known results about sphere
packing.
\end{abstract}

%
\begin{keyword}[class=AMS]
\kwd{60J65}
\kwd{60K35}.
\end{keyword}
\begin{keyword}
\kwd{Archimedes' principle}
\kwd{reflecting Brownian motion with drift}
\kwd{stationary distribution}
\kwd{sphere packing}
\kwd{surface of liquid}
\kwd{centrifuge effect}.
\end{keyword}

\end{frontmatter}

\section{Introduction}\label{intro}

We consider a model involving ``hard core objects'' (typically,
spheres) moving as independent Brownian motions, reflecting
from each other and subjected to a constant ``force,'' that
is, having a constant drift. The objects are confined to a
``vessel'' by reflection, that is, they cannot leave a subset
of Euclidean space.
Our ``toy model'' illustrates several well-known physical phenomena for
liquids, under some technical (mathematical) assumptions.
We prove some theorems for moving objects of any size and
shape but the most interesting examples involve a large number
of spheres, most of them small.

The first of the three phenomena that our model generates is
tight packing of the objects under large pressure and the
formation of the surface of the liquid, that is, a hyperplane
such that most spheres are below the surface and there is
little room to pack any more spheres below the surface.

The second phenomenon is the ``centrifuge'' effect. Centrifuges
are used to separate materials consisting of small particles
(molecules) with different mass. In this example, we consider
spheres of the same size but subject to different ``forces,''
that is, drifts. The ``heavy'' spheres tend to be closer to the
bottom than light spheres.

The third phenomenon is Archimedes' principle which says that
an object immersed in a fluid is buoyed up by a force
equal to the weight of the fluid displaced by the object.
We will illustrate this principle by a family of two-dimensional discs.
One disc is large and it is
submerged in a ``liquid'' consisting of a very large number of
much smaller discs. The large disc either ``floats'' or
``sinks'' depending on the ratio of its drift and the drift of
small discs. This model is limited to the two-dimensional case
because the classical sphere packing problem is completely
understood only in this case. A similar probabilistic theorem
can be stated and proved in higher dimensions if the relevant
information on sphere packing is available.

Finally, we will give an example involving objects with
``inertia,'' in which high inertia will make the large object
(disc) sink more easily even if the drift of this object would
not be sufficient to make it sink without inertia. The inertia
is modeled by oblique reflection and low diffusion constant.
Two spheres are said to reflect in an oblique way if the amount of
push (a multiple of the local time) experienced by the spheres
during reflection is not identical for the two spheres.

Although Section \ref{sec:stat} contains technical material needed for
our main results,
it may have some independent interest. There we construct reflected Brownian
motion with drift in an arbitrary Euclidean domain and find its
stationary distribution. The reason for this great level of generality
is that we apply these results to the configuration space of hard core
objects. Even if the objects are spheres, the configuration space does
not have to be smooth. Some regularity properties of the configuration
space for nonoverlapping balls were proved in \cite{DLM}, Proposition 4.1.

The present article has its roots in an analogous
one-dimensional model studied briefly in Section 2 of
\cite{BPS}. A construction of an infinite system of reflecting
Brownian hard core spheres was given in \cite{O}. See the
\hyperref[intro]{Introduction} and references in that paper for the history and
ramifications of the problem.

We are grateful to Charles Radin for the following remarks and
references (but we take the responsibility for any inaccuracies). A
physical system that could reasonably be called a Brownian fluid is a
colloid, like milk. Since we are using ``reflecting hard spheres,''
this specializes to noncohesive, hard-particle colloids. One of the
classic material properties which are demonstrated in colloids is the
fluid/solid phase transition known by simulation in the hard sphere
model. Although this was first demonstrated earlier by others, the
definitive paper seems to be \cite{RDXRC}. A short expository
introduction to related problems can be found online~\cite{R}. A recent
preprint concerned with the motion of globules is \cite{F}.

The rest of the paper is organized as follows. Section
\ref{sec:stat} is devoted to the analysis of reflected Brownian
motion with drift in an arbitrary Euclidean domain.
Section~\ref{sec:conf} contains some lemmas about the geometry of the
configuration space of objects in a vessel. We give a
sufficient condition for the existence of a stationary
distribution for a family of reflecting objects in Section
\ref{sec:exist}. Finally, Section \ref{sec:examples} contains
our main results, informally discussed above. Our mathematical
model is formally introduced at the beginning of Sections
\ref{sec:conf} and \ref{sec:exist}.

\section{Stationary distribution for reflected Brownian motion with
drift}\label{sec:stat}

Constructing reflected Brownian motion with a constant drift
$\bv$ in a \textit{general} open connected set $D \subset\R^n$ is
quite delicate. Even its definition needs a careful
formulation. This will be done in this section. We will then
give a formula for the stationary distribution of this process
(see \cite{DM,DR,HW} for some related results). A noteworthy
aspect of Theorem \ref{th:j14.2} below is that we do not
require any regularity assumptions on the boundary of $D$.

When $D$ is a $C^3$-smooth domain in $\R^n$, a (normally) reflecting
Brownian motion $X$ with constant drift $\bv\in\R^n$ can be described
by the following SDE:
%
\begin{equation}\label{e:2.1}
dX_t = dB_t + \bv \,dt + {\bn} (X_t) \,dL_t, \qquad  t\geq0,
\end{equation}
with the constraint that $X_t\in\overline D$ and $L$ is a
continuous nondecreasing process that increases only when $X$
is on the boundary $\partial D$. In (\ref{e:2.1}), $B$ is
Brownian motion on $\R^n$ and ${\bn}$ is the unit inward normal
vector field of $D$ on~$\partial D$. When~$D$ is $C^3$-smooth,
the strong existence and pathwise uniqueness of solution to
(\ref{e:2.1}) is guaranteed by~\cite{LS}. When $D$ is a
Lipschitz domain and $\bv=0$,~(\ref{e:2.1}) has a unique weak
solution by \cite{BBC}, Theorem 1.1(i). Using a~%
Girsanov transform, we conclude that weak existence and weak uniqueness
hold for~(\ref{e:2.1}) with nonzero constant drift $\bv$. When
$\bv=0$, this equation has a unique strong solution when the
domain $D$ is $C^{\gamma}$ with $\gamma> 3/2$,
by~\cite{BB}, Theorem~1.1. To motivate the definition of reflecting
Brownian motion with constant drift $\bv$ in a general,
possibly nonsmooth, domain, observe that when $D$ is~$C^3$ and
$\PP_x$ denotes the law of the solution $X$ to (\ref{e:2.1})
with $X_0=x$, then $\{ X, \PP_x, x\in\overline D\}$ forms a
time-homogeneous strong Markov process with state space
$\overline D$. Let $\{P_t, t\geq0\}$ denote its transition
semigroup and $\rho(x):= e^{\bv\cdot x}$. It is easy to check
that for every $t>0$, $P_t$ is a symmetric contraction operator
on $L^2(\overline D, m)$, where $m(dx):= 1_D(x) \rho(x)^2 \,dx$.
Let $(\EE, \FF)$ denote the Dirichlet form of $X$ on
$L^2(\overline D, m)$; that is,
%
\begin{eqnarray}\label{e:2.2}
\FF&=&\biggl\{ u\in L^2(D; m)\dvtx \lim_{t\to0} \frac1t (u-P_tu,
u)_{L^2(D; m)}
<\infty\biggr\}, \\ \label{e:2.3}
\EE(u, v) &=& \lim_{t\to0} \frac1t (u-P_tu, v)_{L^2(D; m)}
\qquad\mbox{for } u, v\in\FF.
\end{eqnarray}
Then it is easy to check [see also the proof of Theorem
\ref{th:j14.2}(i) below] that
%
\begin{eqnarray} \label{e:2.4}
\FF&=& \{u\in L^2(D; m)\dvtx \nabla u \in L^2(D; m) \} , \\
\label{e:2.5}
\EE(u, v) &=&\frac12 \int_D \nabla u(x) \cdot\nabla v(x) m(dx)
\qquad \mbox{for } u, v\in\FF
\end{eqnarray}
and that\vspace*{1pt} $(\EE, \FF)$ is a regular Dirichlet form on $L^2(\overline
D; m)$
in the sense that $C_c(\overline D)\cap\FF$ is dense both in
$C_c(\overline D)$
with respect to the uniform norm and in~$\FF$ with respect to the
Hilbert norm
$\sqrt{\EE_1 (u , u )}:=\sqrt{ \EE(u , u )+ (u , u )_{L^2(D; m)}}$.
Here $C_c(\overline D)$ is the space of continuous functions on
$\overline D$
with compact support. This motivates the following definition.
\begin{definition}\label{D:2.1}
Let $D\subset\R^n$ be an open connected set. A continuous
Markov process $\{X_t, \PP_x, x\in D\}$ taking values in
$\overline D$ will be called a reflecting Brownian motion with constant
drift $\bv$ if
it is symmetric with respect to the
measure $m(dx):= 1_D(x) e^{2 \bv\cdot x} \,dx$, and
$\PP_x(X_t\in\partial D)=0$ for every $x\in D$ and $t>0$ and
whose associated Dirichlet form $(\EE, \FF)$ in the sense of (\ref
{e:2.2}) and~(\ref{e:2.3})
is given by (\ref{e:2.4}) and (\ref{e:2.5}).
\end{definition}
\begin{remark}\label{R:2.2}
\begin{longlist}
\item
Note that reflecting Brownian motion
$X$ with constant drift $\bv$
does not have to be a strong Markov process on $\overline D$.
Nevertheless, its associated transition semigroup $\{P_t, t\geq0\}$ is well
defined by the formula
\[
\int_D f(x) P_t g (x) m(dx)= \E_m [ f(X_0) g(X_t)]
\qquad\mbox{for every } f, g \in C_c(D).
\]
Thus, defined
$\{P_t, t\geq0\}$ is a strongly continuous
symmetric semigroup in $L^2(D; m)$
and so the Dirichlet form $(\EE, \FF)$ is well defined (see \cite{FOT},
Table 1, pa\-ge 18 and Theorems 1.3.1 and 1.4.1).

\item When $\bv=0$, the Dirichlet form $(\EE, \FF)$ of
(\ref{e:2.4}) and (\ref{e:2.5})
will be denoted as $(\EE^0, W^{1,2}(D))$. Note that $W^{1,2}(D)$ is the
classical
Sobolev space on $D$ of order $(1,2)$.
\end{longlist}
\end{remark}
\begin{theorem}\label{th:j14.2} Suppose that $\bv\in\R^n$ and
$D \subset\R^n$ is open and connected.

\begin{longlist}
\item
There exists a unique in law reflected Brownian motion
$X_t$ in $D$ with constant drift $\bv$.
\item
Suppose that $D_k$, $k\geq1$, is a sequence of open
connected sets with smooth boundaries such that $D_k \subset
D_{k+1}$ for $k\geq1$ and $\bigcup_{k\geq1} D_k = D$. Let
$X^k_t$ be the reflected Brownian motion in $D_k$ with a
constant drift $\bv$. Assume that $X^k_0 \to x_0\in D$ in
distribution. Then $\{X^k_t, t\geq0\}$ converge weakly to
$\{X_t, t\geq0\}$, $X_0 = x_0$, in $C([0,\infty); \overline
D)$, as $k\to\infty$.
\item
The function $f(x) = \exp( 2 x\cdot\bv) $, $x\in D
$, is the density of an invariant measure for $X$. If this
density is integrable over $D$, then $\mu(dx):=f(x)\,dx/\int_{D}
f(x)\,dx$ is the unique stationary distribution for $X$.
\item
Suppose that $D$ and vector $\bv_1 \ne0$ are fixed
and let $\bv_b = b \bv_1$ for $b>0$.
Assume that $\int_D \exp( 2 x\cdot\bv_1) \,dx<\infty$.
Then there exists a
stationary distribution ${\mu}_b(d x)$ for reflected Brownian
motion in $D$ with drift $\bv_b$, for every $b\geq1$. Suppose
that $c_0 := \sup_{x\in D} (2 x\cdot\bv_1) < \infty$ and let $
D(\eps) = \{x\in D\dvtx 2x \cdot\bv_1 > c_0 - \eps\} $ for
$\eps>0$. Then for every $\eps>0$ we have $\lim_{b\to\infty}
\mu_b (D(\eps)) = 1$.\vadjust{\goodbreak}
\end{longlist}
\end{theorem}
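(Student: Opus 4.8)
The plan is to derive all four parts from the Dirichlet form $(\EE,\FF)$ of \eqref{e:2.4}--\eqref{e:2.5}. For existence in (i) I would first confirm that $(\EE,\FF)$ is a regular, strongly local Dirichlet form on $L^2(\overline D;m)$: strong locality is immediate from the gradient form, and regularity (density of $C_c(\overline D)\cap\FF$ in $\FF$ and in $C_c(\overline D)$) follows from Meyers--Serrin density of smooth functions in the weighted Sobolev space, the weight $e^{2\bv\cdot x}$ being smooth and locally bounded above and below. The Fukushima--Oshima--Takeda theory (\cite{FOT}) then produces an associated $m$-symmetric process $X$, which has continuous paths by strong locality. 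It is conservative, since locally constant functions lie in $\FF$ with $\EE(1,\cdot)=0$, and it does not charge $\partial D$ because $m(\partial D)=0$ while $m$ is invariant, so $\E_m[1_{\partial D}(X_t)]=m(\partial D)=0$. For uniqueness in law I would argue that any process satisfying Definition \ref{D:2.1} has the same Dirichlet form and hence the same $L^2(D;m)$ semigroup $\{P_t\}$; since it is continuous and avoids $\partial D$, its transition function on $D$ is pinned down by $\{P_t\}$, and continuous Markov processes with the same transition function and initial law agree in law.

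For (ii) I would prove weak convergence in two stages. For the finite-dimensional distributions, the forms $(\EE^k,\FF^k)$ attached to the smooth domains $D_k$, transported into $L^2(D;m)$, converge in the Mosco sense as $D_k\uparrow D$; monotone convergence of Dirichlet forms yields strong convergence of the resolvents and semigroups in $L^2(D;m)$, and with $X^k_0\to x_0$ this gives convergence of finite-dimensional distributions of $X^k$ to those of $X$. For tightness in $C([0,\infty);\overline D)$, each $X^k$ is a semimartingale $dX^k_t=dB_t+\bv\,dt+\bn(X^k_t)\,dL^k_t$ whose martingale part is Brownian and whose bounded-variation part is the drift plus an inward-pushing reflection term; uniform moment and modulus-of-continuity estimates, obtained by controlling the reflection local time $L^k$ and using that it only pushes into $\overline{D_k}\subset\overline D$, give tightness. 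Combining the two stages gives $X^k\Rightarrow X$, the limit being identified through its semigroup. I expect this step to be the main obstacle: establishing the Mosco convergence for Neumann-type forms on a merely increasing, possibly irregular and unbounded family of domains, and simultaneously controlling the reflection terms to get path-space tightness, is where the real work lies and where the generality of $D$ bites.

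Parts (iii) and (iv) are then comparatively short. For (iii), since $m(dx)=f(x)\,dx$ is the symmetrizing measure and $X$ is conservative, $\int_D P_tg\,dm=(P_tg,1)_m=(g,P_t1)_m=\int_D g\,dm$, so $f$ is an invariant density; if $\int_D f\,dx<\infty$ we normalize to $\pi$. Uniqueness follows from irreducibility: since $D$ is connected, a $P_t$-invariant set $A$ gives $1_A\in\FF$ with $\EE(1_A,1_A)=\tfrac12\int_D|\nabla 1_A|^2\,m=0$, forcing $A$ to be trivial, and an irreducible process with a finite invariant measure has a unique stationary distribution. For (iv), the invariant density for $\bv_b=b\bv_1$ is $f_b=f_1^{\,b}$ with $f_1(x)=e^{2x\cdot\bv_1}$; under $c_0=\sup_{x\in D}2x\cdot\bv_1<\infty$ we have $f_b\le e^{(b-1)c_0}f_1$, hence $\int_D f_b\le e^{(b-1)c_0}\int_D f_1<\infty$ and $\pi_b$ exists for all $b\ge1$ by (iii) (some such growth control on $2x\cdot\bv_1$ is genuinely needed, since on thin cusp domains $\int_D f_1<\infty$ can coexist with $\int_D f_1^{\,b}=\infty$). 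Finally, for the concentration, $\pi_b(D\setminus D(\eps))=\big(\int_{D\setminus D(\eps)}f_1^{\,b}\big)\big/\big(\int_D f_1^{\,b}\big)$; on $D\setminus D(\eps)$ one has $f_1^{\,b}\le e^{(c_0-\eps)(b-1)}f_1$, while $\int_D f_1^{\,b}\ge e^{(c_0-\eps/2)b}\,|D(\eps/2)|$ on the nonempty open set $D(\eps/2)$, so the ratio is $O(e^{-(\eps/2)b})\to0$ and $\pi_b(D(\eps))\to1$.
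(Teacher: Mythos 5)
Parts (iii) and (iv) of your proposal are essentially correct and match the paper's arguments; your treatment of (iv), including the observation that a bound such as $c_0<\infty$ is already needed for the existence of $\pi_b$, is if anything cleaner than the paper's. The genuine gaps are in (i) and (ii). For (i), the claim that $(\EE,\FF)$ is regular on $\overline D$ ``by Meyers--Serrin'' fails for a general open connected $D$: Meyers--Serrin gives density of functions smooth \emph{in} $D$, not continuous up to $\overline D$ with compact support, and for a slit disc $D=B(0,1)\setminus\bigl([0,1)\times\{0\}\bigr)$ elements of $W^{1,2}(D)$ have distinct traces on the two sides of the slit, so $C_c(\overline D)\cap\FF$ is not dense in $\FF$ and the form is not regular on the Euclidean closure. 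This is precisely why the paper works on the Martin--Kuramochi compactification $D^*$ (via \cite{C1,C2}), takes the driftless reflecting process there, and produces the drift by a Girsanov transform, only afterwards identifying the symmetric Dirichlet form of the transformed process with $(\EE,\FF)$. Two further problems: the Fukushima--Oshima--Takeda construction yields a process only for quasi-every starting point, whereas Definition \ref{D:2.1} requires $\P_x$ for every $x\in D$ (the paper gets this from the absolute continuity of the transition function of the reflecting process); and your conservativeness argument ($1\in\FF$ with $\EE(1,\cdot)=0$) requires $m(D)<\infty$, which is not assumed --- the paper gets conservativeness for free because the Girsanov density is a true exponential martingale.

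For (ii) you have correctly located the hard point but not supplied the idea that makes it work. Mosco convergence would at best give convergence of the semigroups in $L^2(D;m)$, hence finite-dimensional convergence under $m$-absolutely-continuous initial laws, not from the fixed point $x_0$; and path-space tightness via uniform control of the boundary local times $L^k$ is not available here, since the limiting domain has no regularity and its reflecting Brownian motion need not even be a semimartingale. The paper's route is different and is the crux of the proof: the weak convergence $(Y^k,B^k)\Rightarrow(Y,B)$ of the \emph{driftless} reflecting Brownian motions is quoted from \cite{BC}; one then passes to an a.s.\ convergent Skorokhod representation, notes that the Girsanov densities satisfy $M^k_T\to M_T$ a.s., and upgrades this to $\lim_k\E[\Phi(Y^k)M^k_T]=\E[\Phi(Y)M_T]$ by applying Fatou's lemma to both $\Phi$ and $1-\Phi$ and using $\E[M^k_T]=\E[M_T]=1$. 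Without this reduction to the driftless case your outline for (ii) cannot be completed.
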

\begin{pf}
(i) The following facts have been established in \cite{C1,C2}.
Let $(\EE^0, \FF)$ be the Dirichlet form defined by
(\ref{e:2.4}) and (\ref{e:2.5}) with constant function 1 in place
of $\rho$.
There is a continuous strong Markov process
$\{Y^*, \PP^*_x, x\in D^*\}$ on the Martin--Kuramochi compactification
$D^*$ of $D$ whose associated Dirichlet space is $(\EE^0,
W^{1,2}(D))$.
For $1\leq i\leq m$, let $f_i(x):= x_i$. Then
$f_i$ admits a~quasi-continuous extension to $D^*$. Define
$Y_t= (f_1(Y_t^*),\ldots, f_m (Y_t^*))$, which we call\vspace*{1pt}
symmetric reflecting Brownian motion on $\overline D$. Process
$Y$ admits the following decomposition:
\[
Y_t=Y_0+B_t +N_t, \qquad  t\geq0,
\]
where $B$ is Brownian motion on $\R^n$ and $N_t$ is an $\R^n$-valued
process locally
of zero quadratic variation.

We now construct reflecting Brownian motion $X$ on $\overline
D$ with constant drift~$\bv$ through Girsanov transform. Let
$\{\FF_t, t\geq0\}$ be the minimal augmented filtration
generated by $Y^*$. (This notation is similar to the one used
for the domain $\FF$ of the Dirichlet form but there will be
little opportunity for confusion.) For $x\in D^*$, define the
measure $\PP_x$ by
\[
\frac{d \PP_x}{d \PP_x^*} = M_t:=\exp\biggl( \int_0^t \bv  \,dB_s -
\frac
12 \int_0^t
| \bv|^2 \,ds \biggr)  \qquad\mbox{on each } \FF_t.
\]
Since the right-hand side forms a martingale under $\PP_x^*$,
$Y^*$ under $\PP_x$ has infinite lifetime. By the same (but
simpler) argument as that for \cite{CFTYZ}, Lemma~2.4 [with
$\rho(x):=e^{\bv\cdot{\mathbf x}}$ there], one can show that $(Y^*,
\PP_x, x\in D^*)$ is a symmetric Markov process with respect to
the measure $m(dx)=1_D(x) \rho(x)^2 \,dx$. On the other hand, as
a special case of \cite{CFKZ}, Theorem 3.1, the asymmetric
Dirichlet form $(\EE^*, \FF^*)$ associated with $\{Y^*, \PP_x\}$
in $L^2(D^*; dx)$ is $(\EE^*, W^{1,2}(D))$, where
\[
\EE^* (u, v)= \frac12 \int_D \nabla u(x) \cdot\nabla v(x) \,dx -
\int_D \bigl(\bv\cdot\nabla u(x)\bigr) v(x) \,dx, \qquad  u, v\in W^{1,2}(D).
\]
Denote by $\{G_\alpha, \alpha>0\}$ the resolvent of $\{Y^*,
\PP_x\}$. The above means that (cf.~\cite{MR}, Theorem I.2.13)
\[
\Bigl\{ u\in L^2 (D; dx)\dvtx \sup_{\beta>0} (u-\beta G_\beta u,
u)_{L^2(D;dx)}<\infty
\Bigr\} = W^{1,2}(D)
\]
and for $u, v\in W^{1,2}(D)$,
%
\begin{eqnarray}\label{e:2.6a}
&&\lim_{\beta\to\infty} (u-\beta G_\beta u,
v)_{L^2(D;dx)}\nonumber\\[-8pt]\\[-8pt]
&&\qquad= \frac12 \int_D \nabla u(x) \cdot\nabla v(x) \,dx -
\int_D \bigl(\bv\cdot\nabla u(x)\bigr) v(x) \,dx .\nonumber
\end{eqnarray}
Let $(\overline\EE, \overline\FF)$ be the symmetric
Dirichlet
form of $\{Y^*, \PP_x\}$ in $L^2(D^*; m)$. Denote by
$bW^{1,2}_{c}(D)$ and $b\FF_c$ the families of bounded
functions in $W^{1,2}(D)$ and in~$\FF$ with compact support,
respectively. Note that $bW^{1,2}_{c}(D)=b\FF_c$ and it is a~%
dense linear subspace in both $(W^{1,2}(D), (\EE^0_1)^{1/2})$
and $(\FF, \EE_1^{1/2})$. By~(\ref{e:2.6a}), for $u\in
bW^{1,2}_{c}(D)=b\FF_c$,
\begin{eqnarray*}
&&
\lim_{\beta\to\infty} (u-\beta G_\beta u, u)_{L^2(D; \rho^2 \,dx)}\\
&&\qquad= \lim_{\beta\to\infty} (u-\beta G_\beta u, \rho^2 u)_{L^2(D; dx)}
\\
&&\qquad= \frac12 \int_D \nabla u(x) \cdot\nabla(\rho^2 u)(x) \,dx -
\int_D \bigl(\bv\cdot\nabla u(x)\bigr) (\rho^2 u) (x) \,dx \\
&&\qquad= \frac12 \int_D |\nabla u(x)|^2 m(dx).
\end{eqnarray*}
This implies
(cf. \cite{FOT}, Lemma 1.3.4(ii))
that $b\FF_c \subset\overline\FF$ and
for $u, v\in b\FF_c$,
\[
\overline\EE(u, v) = \frac12 \int_D \nabla u(x)\cdot\nabla v(x)
m(dx)=\EE(u, v).
\]
It follows
that
$\FF\subset\overline\FF$ and $\overline\EE= \EE$ on $\FF$.
Conversely, for $u\in b \overline\FF_c$,
we have
$\rho^{-2}u \in b \overline\FF_c$.
Hence,
\begin{eqnarray*}
\lim_{\beta>0} (u-\beta G_\beta u, u)_{L^2(D; dx)}
&=& \lim_{\beta>0} (u-\beta G_\beta u, \rho^{-2}u)_{L^2(D; m)}\\
&=&\overline\EE(u, u)^{1/2}   \overline\EE(\rho^{-2}u,   \rho
^{-2}u)^{1/2}
<\infty.
\end{eqnarray*}
This implies that $u\in bW^{1,2}_c(D)\subset\FF$. In other
words, $b\overline\FF_c \subset\FF$. We conclude that
$(\overline\EE, \overline\FF)=(\EE, \FF)$. This completes the
proof that $(Y, \PP_x)$ is a reflecting Brownian motion with
constant drift on $\overline D$. To emphasize that we have
constructed reflecting Brownian motion with drift as in
Definition \ref{D:2.1}, we switch to our original notation used
in that definition, that is, processes $Y^*$ and $Y$ under
measure $\PP_x$ will be denoted $X^*$ and $X$, respectively.

Next we establish uniqueness. Suppose that $\wt X$ is another
reflecting Brownian motion with constant drift $\bv$ on $D$. By
\cite{FOT}, Lemma 1.3.2 and Theorem~1.3.1, the transition
semigroup of $\wt X$ should be the same as the transition
semigroup of $X$. So as continuous processes, $\wt X$ and $X$
share the same law under the initial distribution~$m$. Since
the subprocesses of $\wt X$ and $X$ killed upon leaving $D$ are
Brownian motions in $D$ with constant drift $\bv$, it follows
that $\wt X$ and $X$ have the same distribution for every
starting point $x\in D$.

(ii) Since $D_k$ has smooth boundary, the Martin--Kuramochi
compactification of $D_k$ coincides with the Euclidean closure
of $D_k$. So reflecting Brownian motion $\{Y^k, \PP^{*, k}_x\}$
on $\overline D_k$ is a strong conservative Markov process with
continuous sample paths. Each $Y^k$ admits a Skorokhod
decomposition (cf.~\cite{C2})
\[
Y^k=Y^k_0+B^k_t + \int_0^t {\bn}_k (Y^k_s) \,dL^k_s, \qquad  t\geq0,
\]
where $B^k$ is Brownian motion on $\R^n$, $\bn_k (x)$ is the unit
inward normal vector
at $x\in\partial D_k$ and $L^k$ is the boundary local time for
reflecting Brownian
motion $Y^k$.
As we saw in (i) above, each $X^k$ can be
generated from reflecting Brownian motion $Y^{k}$ on $\overline D_k$ by the
Girsanov transform
\[
\frac{d \PP^k_x}{d \PP_x^*} = M^k_t:=\exp\biggl( \int_0^t \bv
\,dB^k_s -
\frac12 \int_0^t
| \bv|^2 \,ds \biggr) \qquad \mbox{on each } \FF^k_t.
\]
Since $Y^k_0=X^k_0$ is assumed to converge to $x_0\in D$ in distribution,
it is established in \cite{BC} that $(Y^k, B^k)$ converges weakly to
$(Y, B, \PP^*_x)$
in the space $C([0, \infty), \R^n\times\R^n)$ equipped with local
uniform topology.
By the Skorokhod representation theorem (see \cite{EK}, Theorem 3.1.8),
we can construct
$(Y^k, B^k)$ and $(Y, B)$ on the same probability space $(\Omega, \FF,
\overline\PP)$ so that
$(Y^k, B^k)$ converges to $(Y, B)$, $\overline\PP$-a.s., on the time
interval $[0, \infty)$
locally uniformly. Consequently,~$M^k$ converges to $M$, $\overline\PP
$-a.s., on the time interval $[0, \infty)$ locally uniformly,
where
\[
M_t=\exp\biggl( \int_0^t \bv  \,dB_s - \frac12 \int_0^t
| \bv|^2 \,ds \biggr) .
\]
Let $\PP$ be defined by $d\PP/d\overline\PP= M_t$ on $\FF_t$.
Fix $T>0$. It suffices to show that~$X^k$ converges weakly to $X$ in
the space
$C([0, T], \R^n)$. Let $\Phi$ be a continuous function on $C([0, T],
\R
^n) $ with $0\leq
\Phi\leq1$. Since $\Phi(Y^k)\to\Phi(Y)$
$\overline\PP$-a.s. and $M^k_T\to M_T$, $\overline\PP$-a.s.,
by Fatou's lemma,
%
\begin{equation}\label{e:2.6}
\E_{\overline\PP} [ \Phi(Y) M_T]
\leq\liminf_{k\to\infty}
\E_{\overline\PP} [ \Phi(Y^k) M^k_T] \leq\limsup_{k\to\infty}
\E_{\overline\PP}
[ \Phi(Y^k) M^k_T]
\end{equation}
and
%
\begin{equation}\label{e:2.7}
\E_{\overline\PP}
[(1- \Phi) (Y) M_T]\leq\liminf_{k\to\infty}
\E_{\overline\PP} [ (1-\Phi) (Y^k) M^k_T] .
\end{equation}
Summing (\ref{e:2.6}) and (\ref{e:2.7}) we obtain $\E_{\overline\PP} [M_T]
\leq
\limsup_{k\to\infty} \E_{\overline\PP} [M^k_T]$.
Note that all $M^k$'s and $M$ are continuous nonnegative
$\overline\PP$-martingales. Hence, $\E_{\overline\PP}[M^k_T]=1= \E_{\overline\PP
}[ M_T]$
and, therefore, inequalities in (\ref{e:2.6}) and (\ref{e:2.7})
are, in fact, equalities. It follows that
\[
\lim_{k\to\infty} \PP^k [ \Phi(X^k)]=
\lim_{k\to\infty} \E_{\overline\PP} [ \Phi(Y^k) M^k_T] =
\E_{\overline\PP} [ \Phi(Y) M_T] =\E_\PP[ \Phi(X) ].
\]
This proves the weak convergence of
$X^k$ under $\PP^k$ to $X$ under $\PP$.

(iii) By definition, $m(dx)=1_D(x) e^{2\bv\cdot x}$ is a
symmetrizing measure for reflecting Brownian motion $X$ with
constant drift $\bv$ on $D$.
If $m(D)<\infty$, then $\mu:=m/m(D)$
is the unique stationary distribution of $X$ on $D$.
By \cite{Fu}, Theorem 2(ii), for every bounded
$f\in\FF$,
\[
\lim_{t\to\infty} \E_x [ f(X_t)]=\lim_{t\to\infty} \E_x [ f(X^*_t)]
= h(x) \qquad\mbox{for q.e. } x\in D,
\]
where $h$ is a quasi-continuous function with $P_t h=h$ q.e.
for every $t>0$. Here $\{P_t, t\geq0\}$ is the transition
semigroup of $X^*$. Since $D$ is connected, the reflecting
Brownian motion $Y^*$ on $D$ is irreducible and so is $X^*$.
Since $m(D)<\infty$, constant $1 \in\FF$ with $\EE(1, 1)=0$.
Therefore, $X^*$ is recurrent. It follows that $h$ is constant
and equals $\int_D f(x) \mu(dx)$. Note that reflecting Brownian
motion $Y^*$ can be defined
to start from every point in $x\in
D$ and has a transition density function with respect to the
Lebesgue measure in~$D$. As $X^*$ can be obtained from $Y^*$
through Girsanov transform, the same holds for $X^*$. It
follows that for every $x\in D$,
\[
\lim_{t\to\infty} \E_x [ f(X_t)] = \lim_{s\to\infty} \E_x [
P_1f (X_s)]
= \int_D P_1 f(x) \mu(dx)= \int_D f(x) \mu(dx).
\]
Since $\FF$ is dense in the space of bounded continuous
functions on $D$, the last formula shows that $\mu$ is the
unique stationary distribution for $X^*$ and~$X$.

(iv) Since
\[
\int_{D} \exp( 2 x\cdot\bv_b) \,dx
\leq
\frac{\exp( c_0 b)} {\exp( c_0 )}
\int_{D} \exp( 2 x\cdot\bv_1) \,dx < \infty,
\]
it follows from (iii) that
${\mu}_b (dx)
=\exp( 2 x\cdot\bv_b) \,dx/\int_{D} \exp( 2 x\cdot\bv_b) \,dx $ is the
stationary probability distribution for reflected Brownian
motion in $D$ with drift $\bv_b$.

Note that for $x\in D(\eps)$, we have $\lim_{b\to\infty}
\exp(- (c_0 - \eps)b)
\exp( 2 x\cdot\bv_b) =\infty$,
so by the monotone convergence theorem
\[
\lim_{b\to\infty}
\int_{D(\eps)} \exp\bigl(- (c_0 - \eps)b\bigr)
\exp( 2 x\cdot
\bv_b) \,dx =\infty
\]
and, for similar reasons,
\[
\lim_{b\to\infty}
\int_{D \setminus D(\eps)} \exp\bigl(- (c_0 - \eps)b\bigr)
\exp( 2 x\cdot\bv_b) \,dx =0.
\]
It follows that
\begin{eqnarray*}
\lim_{b\to\infty} \mu_b \bigl(D\setminus D(\eps)\bigr)
&\leq& \lim_{b\to\infty} \frac{ \mu_b (D\setminus D(\eps))}
{\mu_b (D(\eps))} \\
&=&
\lim_{b\to\infty} \frac{\int_{D \setminus D(\eps)} \exp(- (c_0 -
\eps)b)
\exp( 2 x\cdot\bv_b) \,dx} {\int_{D(\eps)} \exp(- (c_0 - \eps)b)
\exp( 2
x\cdot
\bv_b) \,dx} \\
&=&0.
\end{eqnarray*}
Consequently, $\lim_{b\to\infty} \mu(D(\eps))= 1-
\lim_{b\to\infty} \mu(D\setminus D(\eps)) =1$.
\end{pf}

\section{Configuration space for hard core
objects}\label{sec:conf}

In this section we will start the formal presentation of our
model and we will prove two lemmas about the configuration
space.\vadjust{\goodbreak}

Suppose that $d\geq2$ and $D\subset\R^d$ is open and connected. The
set $D$ represents the space where hard
core objects may be located. Note that we did not impose any smoothness
assumptions on $\prt D$.

Consider open nonempty bounded sets $S_k \subset\R^d$, $k=1,\ldots,
N$, $N \geq1$. The sets $S_k$'s represent hard core objects.
We will think about $S_k$'s as moving or randomly placed
objects so we will use the notation $S_k(y) = S_k + y$. The
diameter of $S_k$ will be denoted by $\rho_k$.

Let $\bD'\subset\R^{Nd}$ be the set of all
${\mathbf x}= (x^1,\ldots, x^N)$, $x^k \in\R^d$, such that $S_k(x^k)
\subset D$ for all $k=1,\ldots, N$, and $S_j(x^j) \cap S_k(x^k)
=\varnothing$, for $j,k = 1,\ldots, N$, $j\ne k$.
Let $\bD\subset\R^{Nd}$ be the interior of
$\bD'$. We will call $\bD$ the configuration space.

We will prove that the configuration space $\bD$ is
connected for two
examples of $D$ and $S_k$'s. We do not aim at a great
generality because, first, the problem of characterizing $D$
and $S_k$'s such that $\bD$ is connected seems to be very hard
and, second, our main examples in Section \ref{sec:examples}
are concerned with models where connectivity of $\bD$ is rather
easy to see.
\begin{example}\label{ex:j16.2}
\begin{longlist}
\item
Suppose that there exists an upper semi-continuous function
$g\dvtx\break \R^{d-1} \to\R$ such that $D= \{(x_1,\ldots, x_d)\dvtx x_1 >
g( x_2,\ldots, x_d) \}$ and all $S_k$'s are convex
%
\begin{figure}

\includegraphics{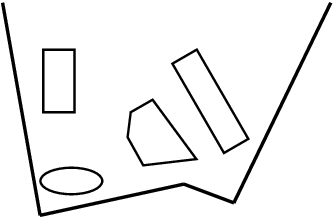}

\caption{Convex objects above the graph of a function.}
\label{fig1}
\end{figure}
(see Figure \ref{fig1}).
\item
Suppose that $D= \{(x_1,\ldots, x_d)\dvtx x_1 >0, x_2^2 +
\cdots+ x_d^2 < 1\}$ is a one-sided open cylinder, $S_k$'s are
%
\begin{figure}[t]

\includegraphics{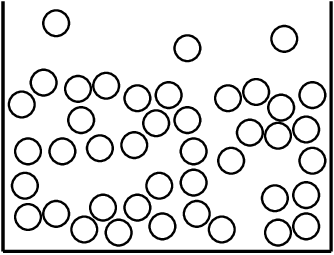}

\caption{Spherical objects in a cylindrical vessel.}
\label{fig2}
\end{figure}
open balls and $\rho_k < 1$ for $k=1,\ldots, N$
(see Figure \ref{fig2}).
\end{longlist}
\end{example}
\begin{lemma}\label{o15.2}
If $D$ and $S_k$'s are such as in Example \ref{ex:j16.2}\textup{(i)} or
\textup{(ii)}, then~$\bD$ is pathwise connected.
\end{lemma}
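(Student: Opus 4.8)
The plan is to fix, in each of the two cases, a single \emph{canonical} configuration $\bx^*\in\bD$ and to construct, for an arbitrary $\bx\in\bD$, a continuous path in $\bD$ from $\bx$ to $\bx^*$; pathwise-connectedness then follows by concatenation. In both cases the essential resource is that $D$ is unbounded in the positive $x_1$-direction, so there is always ``somewhere to go''. In case (i) the domain is \emph{upward invariant}: if $x_1>g(x')$ then $(x_1+t,x')$ still lies in $D$ for $t\geq0$, and since a rigid common translation preserves non-overlap, the whole configuration may be slid straight up by any amount and stays in $\bD$. Moreover, upper semicontinuity of $g$ makes $g$ bounded on compact horizontal sets, so after a large upward slide every object lies in a region $\{x_1>M\}$ on which the lower boundary is irrelevant.

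For case (i) I would then \emph{spread the objects apart}. Normalize so that $0\in S_k$ for each $k$ (a harmless relabeling of reference points), and consider the scaling $\bx\mapsto\bx_\lambda$ of reference-point differences, where $x^i_\lambda-x^j_\lambda=\lambda\,(x^i-x^j)$. Writing $K_{ij}=\{b-a:\,b\in S_j,\ a\in S_i\}$ (a compact convex set), two objects overlap iff $x^i-x^j\in K_{ij}$; since $0\in S_i\cap S_j$ we have $0\in K_{ij}$, while disjointness gives $x^i-x^j\notin K_{ij}$. Convexity then forces $\{t(x^i-x^j):t\geq1\}$ to miss $K_{ij}$, so the dilation preserves pairwise disjointness for every $\lambda\geq1$ and drives all pairwise distances to infinity. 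To keep the dilated objects inside $D$ I would superimpose a common upward translation $h(s)e_1$ (which cancels in the differences), choosing $h(s)$ large enough that the spread-out objects---whose horizontal positions reach only a finite radius at each finite stage---stay above the graph, again using that $g$ is bounded on compact horizontal sets. Once the configuration is very dilute, each object can be routed to its slot in a fixed, tall, widely-spaced vertical tower, the ample surrounding space supplying collision-free corridors; this yields $\bx^*$.

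For case (ii) the cross-section has fixed radius, so horizontal spreading is unavailable and I would instead \emph{stack and sort} along the infinite cylinder. For balls the ``blocking'' relation is tractable: if raising ball $j$ straight up first meets ball $i$, then $i$ sits strictly above $j$, so blocking implies a strictly higher center and is therefore acyclic. Ordering the balls by center height and lifting them simultaneously with speeds decreasing in height keeps every blocker moving away from the ball it blocks, so this motion is collision-free and yields a vertically well-separated stack; each ball can then be slid horizontally onto the axis across the large vertical gaps. Finally I would sort the stack into canonical index order by adjacent transpositions: two balls on the axis with a gap between them are moved to opposite walls and slid past each other vertically. This is exactly where $\rho_k<1$ enters: with radii $r_i,r_j<\tfrac12$, the opposite-wall centers lie at horizontal distance $(1-r_i)+(1-r_j)=2-r_i-r_j>r_i+r_j$, leaving room to pass.

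The main obstacle, and the reason the argument must be arranged this way, is \emph{disentangling}. Naively separating objects by lifting them one at a time, or all at different vertical speeds, fails for general convex bodies: one body can nest beneath the slanted face of another, and in dimension $d\geq3$ the ``directly above'' relation can even contain cycles, so no order of vertical extraction exists. Case (i) circumvents this by spreading \emph{radially} rather than vertically---the identity $0\in K_{ij}$ is precisely what guarantees that moving reference points apart never creates an overlap---while case (ii) exploits the special geometry of balls (acyclicity of blocking) together with the hypothesis $\rho_k<1$ that permits reordering inside the tube. The remaining technical points to verify carefully are that the compensating vertical lift in case (i) keeps every intermediate configuration inside $D$, and that the final ``route each object to its tower slot'' and ``swap adjacent balls'' motions are collision-free; both are straightforward once the configuration is loose, but they rely on the unbounded room that these two domains provide.
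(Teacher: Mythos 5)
Your proposal is correct and, for case (i), follows essentially the same route as the paper: translate the whole configuration upward and then dilate it about a common center, using convexity to see that dilation preserves disjointness, and finally reroute the well-separated objects one by one. Your justification of the dilation step is in fact sharper than the paper's: the paper simply asserts that convex objects moved apart radially ``will not intersect at any time,'' whereas your observation that overlap of $S_i(x^i)$ and $S_j(x^j)$ is equivalent to $x^i-x^j\in K_{ij}$ with $K_{ij}$ convex and containing $0$, so that the ray $\{t(x^i-x^j):t\ge 1\}$ misses $K_{ij}$, makes this precise. For case (ii) your mechanism differs: the paper extracts the balls \emph{one at a time}, always taking the topmost ball remaining and parking it off-axis at heights $b+1,b+2,\dots$, and then reverses the procedure from $\by$ using the height order of $\by$ (so no explicit sorting step is ever needed); you instead lift all balls simultaneously with height-graded speeds, slide them onto the axis, and then sort by adjacent wall-to-wall transpositions. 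Both hinge on the same fact that two balls of diameter less than the cylinder's radius can pass each other, and both are at the same (informal) level of rigor on the final collision-free routing. Two small points to fix in your write-up: the speeds in the simultaneous lift must be \emph{increasing} in height (the blocker sits above, so it must move faster) --- your phrase ``decreasing in height'' says the opposite of what your own justification requires; and balls whose centers start at exactly the same height stay locked at the same height under your lift, so you need a brief tie-breaking perturbation (easy inside the large gaps you have created) before sliding them all onto the axis. The paper's sequential extraction sidesteps both issues automatically.
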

\begin{pf}
Suppose that ${\mathbf x},\by\in\bD$ and $\mathbf{x}\ne\by$. We will
describe a continuous motion of objects $S_k$ inside $D$ such
that the initial configuration is represented by ${\mathbf x}$ and the
terminal configuration is $\by$.

(i) Consider $\bD$ defined relative to $D$ given in Example
\ref{ex:j16.2}(i). Let $\rho_* = \max_{k=1,\ldots, N} \rho_k$.

Our argument will involve constants $c_1,c_2>0$ whose values
will be chosen later. First, we move continuously and
simultaneously all objects $S_k$ by~$c_1$ units in the
direction $(1,0,\ldots, 0)$. Let $\bz= (z^1,\ldots, z^N)$
denote the new configuration. Next we dilate the configuration
by $c_2$ units, that is, we fix $S_1$ and we move continuously
every object $S_k$, $k\ne1$, along the line segment $[z^k, z^k
+ (z^k- z^1) c_2]$ away from $S_1$ at the speed $c_2 |z^2 -
z^1|$. We move all~$S_k$, $k\ne1$, simultaneously. Note that
the objects $S_k$ will not intersect at any time because they
are convex. Let $\bz_1$ denote the configuration of the objects
at the end of the dilation.

Now we choose $c_1$ and $c_2$ so large that all objects $S_k$
are always inside $D$ and the distance between any two objects
is greater than $\rho^*$ when they are in the configuration
$\bz_1$.

Next, we start with the configuration $\by$ and we use a
similar method to move objects $S_k$ continuously from
configuration $\by$ to a configuration $\bu$, such that the
objects do not intersect in the process of moving, they always
stay inside $D$ and the distance between any two objects is
greater than $\rho^*$ when they are in the configuration $\bu$.
We make $c_1$ larger, if necessary, so that the distance from
any object in the configuration $\bz_1$ to any object in the
configuration $\bu$ is greater than $\rho_*$.

At this point, we can move all objects continuously, one by
one, from their location in configuration $\bz_1$ to their
place in configuration $\bu$. We combine motions from ${\mathbf x}$ to
$\bz_1$, then to $\bu$ and, by reversing an earlier motion,
from~$\bu$ to $\by$.

Consider the set $\Gamma$ of all points $\bz_2$ representing
the locations of objects $S_k$ at all times during the motions.
The set $\Gamma$ is connected because the motions of the
objects were continuous, $\Gamma\subset\bD$ because the
objects always stayed in~$D$ and never intersected each other
and clearly ${\mathbf x}, \by\in\Gamma$. We have proved that for any
${\mathbf x},\by\in\bD$ there exists a connected subset of $\bD$
containing both points---this proves that $\bD$ is
pathwise connected.

(ii) Now consider $\bD$ defined relative to $D$ given in
Example \ref{ex:j16.2}(ii). Recall that ${\mathbf x}= (x^1,\ldots,
x^N)$ and let $x^k=(x^k_1,\ldots, x^k_d)$ represent the center
of the $k$th ball. Find a permutation $(\pi(1),\ldots,
\pi(N))$ of $(1,\ldots, N)$ such that $x^{\pi(1)}_1 \geq
x^{\pi(2)}_1 \geq\cdots\geq x^{\pi(N)}_1$. Similarly, let $\by
= (y^1,\ldots, y^N)$ and $y^k=(y^k_1,\ldots, y^k_d)$. Let
$(\sigma(1),\ldots, \sigma(N))$ be a permutation of $(1,\ldots,
N)$ such that $y^{\sigma(1)}_1 \geq y^{\sigma(2)}_1 \geq\cdots
\geq y^{\sigma(N)}_1$. Let $b =
\max(x^{\pi(1)}_1,y^{\sigma(1)}_1) + 1$. Move the
$\pi(1)$th ball in a continuous way to a location inside $D$
such that the first coordinate of its center is equal to $b+1$
and the ball does not intersect the axis of~$D$. Moreover, we
move the ball in such a way that it does not intersect any
other ball or $\prt D$ at any time. Next, move the $\pi(2)$th
ball in a continuous way to a location inside $D$ such that the
first coordinate of its center is equal to $b+2$ and the ball
does not intersect the axis of $D$. We move the ball in such a
way that it does not intersect any other ball or $\prt D$ at
any time. Continue in this way, until we move the $\pi(N)$th
ball to a location inside $D$ such that the first coordinate of
its center is equal to $b+N$ and the ball does not intersect
the axis of $D$. We move the last ball in such a way that it
does not intersect any other ball or $\prt D$ at any time. Such
continuous motions are possible because we always take the
``top'' ball from among those remaining in the original
position and the diameter of any ball is smaller than the
radius of the cylinder $D$.

We now move the balls to the configuration $\by$ by reversing
the steps. First, we move the $\sigma(N)$th ball to the
location where its center is $y^{\sigma(N)}$, in a continuous
way, such that the ball does not intersect any other ball or
$\prt D$ at any time. Next, we move the $\sigma(N-1)$st ball
to the location where its center is $y^{\sigma(N-1)}$, in a
continuous way, such that the ball does not intersect any other
ball or $\prt D$ at any time. We continue in this way until all
balls form the configuration represented by~$\by$.

Consider the set $\Gamma$ of all points $\bz=(z^1,\ldots, z^N)$
representing the centers of all balls at all times during the
motions. The set $\Gamma$ is connected because the motions of
the balls were continuous, $\Gamma\subset\bD$ because the
balls always stayed in $D$ and never intersected each other
and we also have ${\mathbf x}, \by\in\Gamma$. We have proved that for
any ${\mathbf x},\by\in\bD$, there exists a connected subset of $\bD
$ containing both points---this proves that $\bD$ is pathwise
connected.
\end{pf}
\begin{remark}\label{o15.1}
The arguments given in the proof of Lemma \ref{o15.2}
can also be used to show that $\bD' $ is the Euclidean
closure $ \ol\bD$ of $\bD$.
We will apply this observation to tight packings,
based on the hexagonal tight packing, in proofs given later in this paper.
\end{remark}

\section{Existence of stationary distribution}\label{sec:exist}

Informally speaking, we will assume that all objects $S_k$ move
as independent reflecting Brownian motions, with drifts $(-a_k,
0,\ldots, 0)$, with $a_k>0$. Formally, the evolving system of
objects is represented by a stochastic process $\bX_t=(X^1_t,\ldots, X^N_t)$ with values in
$\ol\bD$.
In other words, the $k$th
object is represented at time $t$ by $S_k(X^k_t)$. We assume
that $\bX_t$ is $(Nd)$-dimensional reflected Brownian motion in
$\bD$ with drift
%
\begin{equation}\label{e:5.1}
\bv= (
(-a_1, 0,\ldots, 0),\ldots, (-a_N, 0,\ldots, 0) ),
\end{equation}
where $a_k >0$ for $k=1,\ldots, N$.

The $n$-dimensional volume (Lebesgue measure) of a set $A\subset\R^n$
will be denoted $m_n(A)$.
\begin{lemma}\label{lem:j16.3}
Assume that $\bD$ is connected. Let $D_b = \{x =(x_1,\ldots,
x_d)\in D\dvtx x_1 =b\}$ and $a_* = \min(a_1,\ldots, a_N)$. If
there is $b_0\in\R$ such that $D_b =\varnothing$ for all $b<b_0$
and if there is some $a<a_*$ so that
%
\begin{equation}\label{eq:j16.1}
\lim_{b\to\infty}
m_{d-1}(D_b)
\exp(-2a b) = 0,
\end{equation}
then $\bX$ has a unique stationary distribution.
\end{lemma}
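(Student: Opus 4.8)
The plan is to deduce both existence and uniqueness from the integrability criterion in Theorem~\ref{th:j14.2}(iii) and then to verify that integrability using the geometry of the vessel. Since $\bD$ is connected by hypothesis and $\bX$ is reflected Brownian motion in $\bD\subset\R^{Nd}$ with the constant drift $\bv$ of \eqref{e:5.1}, I would apply Theorem~\ref{th:j14.2} with $\bD$ playing the role of the domain and $Nd$ the role of the dimension. That theorem already guarantees that $f(\bx)=\exp(2\bx\cdot\bv)$ is the density of an invariant measure, and states that once $\int_{\bD}f(\bx)\,d\bx<\infty$, the normalized measure $\pi(d\bx)=f(\bx)\,d\bx/\int_{\bD}f(\bx)\,d\bx$ is the \emph{unique} stationary distribution. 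Thus the whole lemma reduces to the single estimate $\int_{\bD}\exp(2\bx\cdot\bv)\,d\bx<\infty$.

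First I would simplify the integrand. Writing $\bx=(x^1,\dots,x^N)$ with $x^k=(x^k_1,\dots,x^k_d)$ and using \eqref{e:5.1} gives $2\bx\cdot\bv=-2\sum_{k=1}^N a_k x^k_1$, so $f(\bx)=\prod_{k=1}^N\exp(-2a_k x^k_1)$ factorizes. I would then drop the non-overlap constraint, which only enlarges the region: $\bD\subseteq\prod_{k=1}^N A_k$ with $A_k:=\{x^k\in\R^d: S_k(x^k)\subset D\}$. Since the integrand is nonnegative and factorizes, Tonelli's theorem yields $\int_{\bD}f\le\prod_{k=1}^N\int_{A_k}\exp(-2a_k x^k_1)\,dx^k$, and it suffices to bound each single-object integral separately.

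The heart of the argument, and the step I expect to be the main obstacle, is a geometric estimate bounding the horizontal room available to object $k$ at a fixed height by a cross-section of $D$. Let $\alpha_k=\min\{s_1:(s_1,\dots,s_d)\in S_k\}$ and fix a point $(\alpha_k,\tau)$ in the lowest slice of $S_k$. If $S_k(x^k)\subset D$ and $x^k_1=t$, then $(\alpha_k,\tau)+x^k\in D$ forces $(x^k_2,\dots,x^k_d)\in D_{t+\alpha_k}-\tau$; hence the slice of $A_k$ at height $t$ has $(d-1)$-measure at most $|D_{t+\alpha_k}|_{d-1}$. Integrating the horizontal coordinates first and substituting $b=t+\alpha_k$ (using $D_b=\emptyset$ for $b<b_0$ to start the integral at $b_0$) gives
\[
\int_{A_k}\exp(-2a_k x^k_1)\,dx^k\le e^{2a_k\alpha_k}\int_{b_0}^{\infty}|D_b|_{d-1}\,e^{-2a_k b}\,db.
\]

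Finally I would invoke the decay hypothesis. Because $a_k\ge a_*>a$, the assumption $\lim_{b\to\infty}|D_b|_{d-1}e^{-2ab}=0$ furnishes constants $M$ and $b_1$ with $|D_b|_{d-1}\le Me^{2ab}$ for $b\ge b_1$, so $|D_b|_{d-1}e^{-2a_k b}\le Me^{-2(a_k-a)b}$ is integrable at $+\infty$; the contribution of the bounded interval $[b_0,b_1]$ is finite since the cross-sections $|D_b|_{d-1}$ there are finite and locally integrable. Hence each factor is finite, so $\int_{\bD}f<\infty$, and Theorem~\ref{th:j14.2}(iii) delivers the unique stationary distribution $\pi$. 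The only delicate points are the geometric slice bound---where one must be careful that the admissible positions of the whole object are constrained by the cross-section at the object's own base---and the mild regularity ensuring local integrability of $|D_b|_{d-1}$ near the floor $b_0$.
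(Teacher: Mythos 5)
Your proposal is correct and follows essentially the same route as the paper: reduce everything to the integrability of $\exp(2\bx\cdot\bv)$ via Theorem~\ref{th:j14.2}(iii), drop the non-overlap constraint to factor the integral over the $N$ objects, and control each factor by the cross-sectional decay \eqref{eq:j16.1}. The paper simply bounds $\bD$ by $D^N$ and integrates $e^{-2a_kx_1}$ over $D$ slice by slice, whereas you bound the admissible set $A_k$ of each object by translated cross-sections of $D$ --- a slightly more careful version of the same step --- so there is no substantive difference.
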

\begin{pf}
The uniqueness of the stationary distribution follows from
Theorem~\ref{th:j14.2} and the fact that $\bD$ is connected.

In view of Theorem \ref{th:j14.2}(iii), it is enough to show
that $\exp( 2 {\mathbf x}\cdot\bv)$ is integrable over $\bD$. It
follows from (\ref{eq:j16.1}) that for some $c$ and all $b\geq
b_0$,
$m_{d-1}(D_b) \leq c \exp(2a b)$.
This implies that
\begin{eqnarray*}
\int_{\bD} \exp( 2 {\mathbf x}\cdot\bv) \,d{\mathbf x}
&\leq&
\int_{D^N} \exp( 2 {\mathbf x}\cdot\bv) \,d{\mathbf x}
= \prod_{k=1}^N
\int_D \exp(-2 a_k x_1) \,dx\\
&=&
\prod_{k=1}^N
\int_{b_0}^\infty
m_{d-1}(D_{x_1})
\exp(-2 a_k x_1) \,dx_1\\
&\leq&
\prod_{k=1}^N
\int_{b_0}^\infty c \exp(2a x_1) \exp(-2 a_k x_1) \,dx_1
< \infty.
\end{eqnarray*}
\upqed\end{pf}

\section{Examples of macroscopic effects}\label{sec:examples}

In this section, we will be concerned with the distribution of
the process $\bX_t$ under the stationary distribution $\mu$, so
we will suppress the time variable $t$ and we will write $\bX$
for $\bX_0$. We will also use the following notation, $\bX=
(X^1,\ldots, X^N)$ and $X^k= (X^k_1,\ldots, X^k_d)$, for
$k=1,\ldots,N$.

\subsection{Surface of a liquid}

\begin{theorem}\label{th:j18.3}
Suppose that $D$ and $S_k$'s are as in Example \ref{ex:j16.2}\textup
{(i)} or \textup{(ii)} and $D$ satisfies
(\ref{eq:j16.1})
for some $a >0$. Fix some $\alpha_k>0$ for
$k=1,\ldots, N$ and let $c_1=\inf_{{\mathbf x}\in\bD} \sum_{j=1}^N
\alpha_j x^j_1$.
Let $\lambda>0$ and $a_k = \lambda
\alpha_k$ for $k=1,\ldots, N$.
Let $\lambda_*<\infty$ be so large that $\min(a_1,\ldots, a_N) > a$ for
$\lambda\geq\lambda_*$.
Assume that $\lambda\geq\lambda_*$ and $\bX$ has
the stationary distribution which we denote $\mu_\lambda$. [Note that
$(\bX, \mu_\lambda)$
depends on $\lambda$ through drift $\bv$ in (\ref{e:5.1}).]

\begin{longlist}
\item
For any $p,\delta>0$, there exists $\lambda_0< \infty$
such that for $\lambda> \lambda_0$,
\[
\mu_\lambda(\alpha_1 X^1_1 + \cdots+ \alpha_N X^N_1
< c_1 + \delta) >1-p.
\]
\item
For any $p,\delta>0$, there exists $\lambda_0< \infty$
such that for $\lambda> \lambda_0$, with probability greater
than $1-p$, for every $k=1,\ldots, N$, for every $z\in\R^d$
with $z_1 < -\delta$, we have
%
\begin{equation}\label{eq:j22.1}
\bigl(S_k(X^k) + z\bigr) \cap
\biggl(D^c \cup\bigcup_{j\ne k} S_j(X^j) \biggr) \ne
\varnothing.
\end{equation}
\end{longlist}
\end{theorem}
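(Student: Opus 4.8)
The plan is to read the stationary density off Theorem \ref{th:j14.2}(iii) and then reduce both parts to a single scalar concentration statement about the height functional $\Phi(\bx) := \sum_{j=1}^N \alpha_j x^j_1$. Using \eqref{e:5.1} and $a_k=\lambda\alpha_k$ one computes $\bx\cdot\bv = -\lambda\,\Phi(\bx)$, so by Theorem \ref{th:j14.2}(iii) the stationary law $\PS$ has density proportional to $\exp(-2\lambda\,\Phi(\bx))$ on $\bD$. Since $D_b=\emptyset$ for $b<b_0$ forces $D\subset\{x_1\geq b_0\}$ and each $S_k$ is bounded, every coordinate $x^k_1$ is bounded below on $\bD$, so $c_1=\inf_{\bx\in\bD}\Phi(\bx)$ is finite. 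Thus the task is to show that under $\PS$ the value $\Phi(\bX)$ concentrates near its infimum $c_1$ as $\lambda\to\infty$.

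For part (i) I would invoke Theorem \ref{th:j14.2}(iv) applied to the configuration domain $\bD$. Fix $\lambda_1>a/\alpha_*$, where $\alpha_*=\min_k\alpha_k$; then $a<\lambda_1\alpha_*=\min_k a_k$, so Lemma \ref{lem:j16.3} yields a stationary distribution for the (nonzero) base drift $\bv_1:=\lambda_1\bw$, where $\bw=((-\alpha_1,0,\dots,0),\dots,(-\alpha_N,0,\dots,0))$. For $b\geq 1$ the drift $b\bv_1$ corresponds to $\lambda=b\lambda_1$. One checks $c_0:=\sup_{\bx\in\bD}2\bx\cdot\bv_1 = -2\lambda_1 c_1<\infty$, and that the set $\{\bx:2\bx\cdot\bv_1>c_0-\eps\}$ equals $\{\bx:\Phi(\bx)<c_1+\eps/(2\lambda_1)\}$. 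Choosing $\eps=2\lambda_1\delta$, Theorem \ref{th:j14.2}(iv) gives $\PS(\Phi(\bX)<c_1+\delta)\to 1$ as $\lambda=b\lambda_1\to\infty$, which is exactly (i).

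For part (ii) I would show that the failure of \eqref{eq:j22.1} at a configuration forces $\Phi$ to exceed $c_1$ by a definite amount, and then apply (i). Suppose $\bX=\bx\in\bD$ and \eqref{eq:j22.1} fails for some index $k$ and some $z$ with $z_1<-\delta$, i.e. $(S_k(x^k)+z)\cap(D^c\cup\bigcup_{j\neq k}S_j(x^j))=\emptyset$. Since $S_k(x^k)+z=S_k(x^k+z)$, this says precisely that $S_k(x^k+z)\subset D$ and that this translated object is disjoint from all the others; hence the modified configuration $\bx'=(x^1,\dots,x^k+z,\dots,x^N)$ again lies in $\bD$. Then $\Phi(\bx')=\Phi(\bx)+\alpha_k z_1<\Phi(\bx)-\alpha_*\delta$, and since $\Phi(\bx')\geq c_1$ we get $\Phi(\bx)>c_1+\alpha_*\delta$. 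Consequently the event that \eqref{eq:j22.1} fails for some $k$ and some admissible $z$ is contained in the measurable set $\{\Phi(\bX)>c_1+\alpha_*\delta\}$, whose $\PS$-probability is below $p$ for large $\lambda$ by part (i) applied with $\delta$ replaced by $\alpha_*\delta$. This yields (ii).

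The density computation and the finiteness of $c_1$ are routine; the one genuinely delicate point is the reduction in part (ii), where the translation identity $S_k(x^k)+z=S_k(x^k+z)$ must be combined with the defining constraints of $\bD$ to certify that the pushed-down configuration is admissible, so that its $\Phi$-value is bounded below by $c_1$. I expect the main obstacle to be the bookkeeping around this geometric step, together with a light verification that the exceptional event, quantified over all $k$ and all $z$ with $z_1<-\delta$, is contained in $\{\Phi>c_1+\alpha_*\delta\}$, rather than any hard analysis.
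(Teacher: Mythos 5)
Your proposal is correct and follows essentially the same route as the paper: part (i) is the translation of Theorem \ref{th:j14.2}(iv) into the statement that $\Phi(\bx)=\sum_j\alpha_j x^j_1$ concentrates near $c_1$ (the paper uses the identity \eqref{eq:j18.1} with $\bv_1$ the $\lambda=1$ drift, while you take a base drift with $\lambda_1>a/\alpha_*$, a slightly more careful choice that guarantees the hypothesis of (iv) holds), and part (ii) is exactly the paper's argument that a failure of \eqref{eq:j22.1} produces an admissible pushed-down configuration and hence forces $\Phi(\bx)\geq c_1+\alpha_*\delta$, after which (i) applies.
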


Theorem \ref{th:j18.3}(i) says that if the drift of every
process $X^k$ is sufficiently large then the ``weighted center
of mass'' for a typical configuration of $S_k$'s is within an
arbitrarily small number of the infimum of weighted centers of
mass over all permissible configurations with arbitrarily
large probability.
%
\begin{figure}[b]

\includegraphics{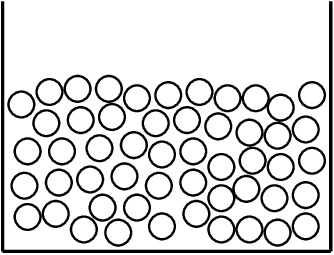}

\caption{There are no ``air bubbles'' below the surface of this
configuration.}
\label{fig3}
\end{figure}
%

%
\begin{figure}[b]

\includegraphics{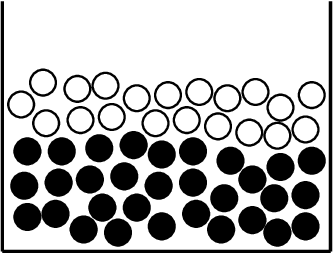}

\caption{White balls have small downward drift while black balls have large
downward drift. Every black ball is ``almost'' below every white ball.}
\label{fig4}
\end{figure}

Part (ii) of the theorem says that for an arbitrarily small
$\delta>0$, if the drift of every process $X^k$ is sufficiently
large, then with arbitrarily large probability, there is no room
in the configuration to move any object $S_k$ to a new location
that would be more than $\delta$ units in the negative
$x_1$-direction below the current location of $S_k$. This
means, in particular, that there are no spherical holes between
$S_k$'s with diameter $\max_k \rho_k$ or greater, $\delta$
units below the ``surface'' of $S_k$'s, that is, the hyperplane
$\{(x_1,\ldots,x_d) \in D\dvtx x_1
= \max_k \sup_{ y\in S_k(X^k)} y_1\}$
(see Figure \ref{fig3}).
\begin{pf*}{Proof of Theorem \ref{th:j18.3}}
(i) Recall the notation from Theorem \ref{th:j14.2}(iv).
We can identify $\bv$ in (\ref{e:5.1}) corresponding to
$\lambda=1$ with $\bv_1$ in Theorem \ref{th:j14.2}(iv).
Then
%
\begin{equation}\label{eq:j18.1}\quad
\{{\mathbf x}\in\bD\dvtx \alpha_1 x^1_1 + \cdots+ \alpha_N x^N_1
< c_1 + \delta\}
= \{{\mathbf x}\in\bD\dvtx
2{\mathbf x}\cdot\bv_1 > c_0 - 2 \delta\},
\end{equation}
where $c_0:= \sup_{{\mathbf x}\in\bD} 2{\mathbf x}\cdot\bv_1$.
It is now easy to see that part (i) of the theorem follows from
Theorem \ref{th:j14.2}(iv).

(ii) Let $\alpha_0 = \inf_{k=1,\ldots,N} \alpha_k$. Suppose that for
some configuration ${\mathbf x}\in\bD$, there exist $k$ and $z\in\R^d$
with $z_1 < -\delta$ such that (\ref{eq:j22.1}) is not satisfied.
Let~$\by$ represent the configuration which is obtained from ${\mathbf x}$ by
moving $S_k$ from~$S_k(x^k) $ to $S_k(x^k) + z$. Note that $\by\in
\bD$. Since $\by\in\bD$ and $z_1 < -\delta$, we must have $\alpha
_1 x^1_1 +\cdots+ \alpha_N x^N_1 \geq c_1 + \alpha_0 \delta$. We now
apply part (i) of the theorem to see that the family of
configurations ${\mathbf x}$, such that (\ref{eq:j22.1}) is not satisfied, has
$\mu_\lambda$-probability less than $p$, if $\lambda$ is sufficiently
large.
\end{pf*}

\subsection{Centrifuge effect}

In this example, all objects have the same shape but they are
subject to different forces (drifts $a_k$).
\begin{theorem}\label{theooo}
Suppose that $D$ and $S_k$'s are as in Example \ref{ex:j16.2}\textup
{(i)} or~\textup{(ii)}, $S_j=S_1$ for $j=2,\ldots, N$ and $D$ satisfies
(\ref{eq:j16.1}) for some $a >0$.
Let the vector~$\bv$ representing drifts be as in (\ref{e:5.1}).
Fix some $\alpha_k>0$ for
$k=1,\ldots, N$ and let $c_1=\inf_{{\mathbf x}\in\bD} \sum_{j=1}^N
\alpha_j x^j_1$.
Let $\lambda>0$ and $a_k = \lambda
\alpha_k$ for $k=1,\ldots, N$.
Let $\lambda_*<\infty$ be so large that $\min(a_1,\ldots, a_N) > a$ for
$\lambda\geq\lambda_*$.
Assume that $\lambda\geq\lambda_*$ and $\bX$ has
the stationary distribution $\mu_\lambda$.
For any $p,\delta>0$, there exists
$\lambda_0< \infty$ such that for $\lambda> \lambda_0$, with
probability greater than $1-p$, for every pair $j,k \in\{1,\ldots,
N\}$
with $\alpha_j > \alpha_k$, we have
$X^j_1 < X^k_1+\delta$.
\end{theorem}

The theorem says that in the stationary regime, with
arbitrarily large probability, if the drift is very strong, then
the identical objects $S_k$ are arranged in an almost monotone
order according to the strength of the drift
(see Figure \ref{fig4}).

\begin{pf*}{Proof of Theorem \ref{theooo}}
The idea of the proof is very similar to that of the proof of Theorem
\ref{th:j18.3}(ii).
Let $\alpha_0 = \min\{ \alpha_j-\alpha_k\dvtx  \alpha_j >
\alpha_k\}$ and note that $\alpha_0 >0$.
Suppose that for some
configuration ${\mathbf x}\in\bD$, there exist $j$ and $k$ such that
$\alpha_j > \alpha_k$ and
$x^j_1 \geq x^k_1+\delta$.
Let $\by\in\bD
$ represent the configuration which is obtained from ${\mathbf x}$ by
interchanging the positions of $S_j(x^j)$ and $S_k(x^k)$. Since $\by
\in\bD$,
$\alpha_j \geq\alpha_k + \alpha_0$ and $x^j_1 \geq x^k_1+\delta$,
we must have
\[
\sum_{i=1}^N \alpha_i x^i_1= \sum_{i=1}^N \alpha_i y^i_1
+(\alpha_j-\alpha_k)(x^j_1-x^k_1)\geq c_1 + \alpha_0 \delta.
\]
We now apply part (i) of Theorem
\ref{th:j18.3} to see that the family of configurations~${\mathbf x}$, such
that $\alpha_j > \alpha_k$ and
$x^j_1 \geq x^k_1+\delta$
for some $j$ and $k$, has $\mu_\lambda$-probability less than~$p$, if
$\lambda$ is
sufficiently large.
\end{pf*}

\subsection{Archimedes' principle}\label{sec:float}
We will discuss the phenomena of floating and sinking assuming
that $d=2$ and $S_k$'s are discs. The reason for the limited
generality of this example is that our argument is based on the
classical sphere packing problem. This problem was completely
solved in two dimensions a long time ago (see \cite{FT}) and it
also has been settled in three dimensions more recently (see
\cite{H}). The situation is more complicated in higher
dimensions (see \cite{CS,FT,H} and references therein for
details). We will further limit our discussion to the
cylindrical domain defined in Example~\ref{ex:j16.2}(ii)
because this example captures the essence of our claims.
In the following, for $x\in\R^d$ and $r>0$, $B(x, r)$
denotes the open ball with radius $r$ centered at $x$.
\begin{theorem}\label{th:j20.1}
Suppose that $d=2$,
$D=(0, \infty) \times(-1, 1)$,
$S_1=B(0,1/2)$ and $S_k= B(0,\rho)$ for $k=2,\ldots,N$, where
$\rho<1/2$. Assume that $a_2 =a_3 =\cdots= a_N$ and $\bX$ has
distribution $\mu$.

\begin{longlist}
\item
For any $p,\delta,\gamma>0$, there exists $\rho_0>0$ and
$N_0<\infty$ such that, for $\rho\in(0,\rho_0)$ and $N\geq N_0$ which
satisfy the
condition $\rho^2 N\sqrt{12}> 2 - \pi/4$, there exists $a_0>0$
such that if $a_2\geq a_0$ and $a_1/a_2 := \gamma_1 \leq
(\pi/(4\sqrt{12})-\gamma)/ \rho^2$ then
\[
\mu\Bigl( X^1_1 < \max_{k=2,\ldots, N} X^k_1 -1/2-\delta
\Bigr) < p.
\]

\item For any $p,\delta,\gamma>0$, there exists $\rho_0>0$ and
$N_0<\infty$ such that, for $\rho\in(0,\rho_0)$ and $N\geq N_0$ which
satisfy the
condition $\rho^2 N\sqrt{12}> 2 - \pi/4$, there exists $a_0>0$
such that if $a_2\geq a_0$ and $a_1/a_2 := \gamma_2 \geq
(\pi/(4\sqrt{12})+\gamma)/ \rho^2$ then
\[
\mu( X^1_1 > 1/2 + \delta) < p.
\]
\end{longlist}
\end{theorem}

The theorem is a form of Archimedes' principle. The first part
of our result says that if the drift of the large ball is
smaller than the sum of the drifts of displaced small balls
then the large ball will ``float,'' that is, its uppermost
point will be at least very close to the ``surface'' of the
``liquid'' (or above the surface). The second part says that if
the drift of the large ball is greater than the sum of the
drifts of displaced small balls, then the large ball will sink
to the bottom. Both results assume that the system is in the
stationary distribution and all drifts are large
(see Figure \ref{fig5}).

\begin{figure}

\includegraphics{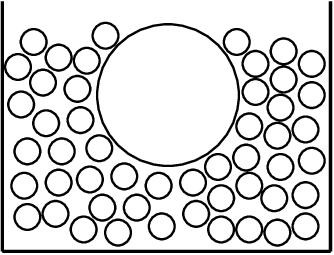}

\caption{The large disc is floating because its drift is less
than the sum of drifts of displaced small discs (properly scaled).}
\label{fig5}
\end{figure}

The condition $\rho^2 N\sqrt{12} > 2 - \pi/4$ is needed to make sure
that there is an ample
supply of small discs to make the large disc float.
\begin{pf*}{Proof of Theorem \ref{th:j20.1}}
(i) We will use some results about disc packing in the plane from
\cite{CS,FT,H}. The usual honeycomb lattice packing of disjoint discs
has density $\pi/\sqrt{12}$ and this is the highest possible disc
packing density.

Consider the unique honeycomb packing ${\mathcal S}$ of
open discs
with radii $\rho$
in the whole plane in which some adjacent discs have their centers on
the line parallel to the first axis and one disc is centered at 0.
For a set $A$, we will say that $S_1,\ldots, S_k$ is a \textit{honeycomb
disc packing in $A$} if it contains all discs in ${\mathcal S}$ that are
contained in~$A$.

By abuse of notation, we will use $| \cdot |$ to denote the
area of a planar set and also the cardinality of a finite set.

Consider arbitrary $p,\gamma,\delta>0$. We fix some $\beta> 2 - \pi/4$
and assume that $2 - \pi/4 < \rho^2 N\sqrt{12}< \beta$.
Note that it is sufficient to prove the theorem for every
fixed~$\beta$.

Suppose that for some configuration ${\mathbf x}\in\bD$, we have
%
\begin{equation}\label{eq:o9.10}
x^1_1 < \max_{k=2,\ldots, N} x^k_1 -1/2-\delta.
\end{equation}
Let $\alpha_1 = \gamma_1$ and $\alpha_k = 1$ for $2\leq k \leq
N$. Let $\lambda= a_2$. Then $\alpha_1 x^1_1 +\cdots+
\alpha_N x^N_1 = \gamma_1 x^1_1 + x^2_1 +\cdots+ x^N_1$. In
view of Theorem \ref{th:j18.3}(i), it will suffice to show
that for small $\rho>0$ there exists $\delta_1
=\delta_1(\rho,N)>0$ such that for some $\by\in\bD$,
%
\begin{equation}\label{eq:j19.2}
\gamma_1 x^1_1 + x^2_1 +\cdots+ x^N_1
> \gamma_1 y^1_1 + y^2_1 +\cdots+ y^N_1 +\delta_1.
\end{equation}

In view of Remark \ref{o15.1}, it will suffice to show that (\ref{eq:j19.2})
is satisfied for some $\by\in\bD' =\ol\bD$.

We divide the next part of the proof into cases and ``subcases.''

\textit{Case $1$}.
If there exist $z=(z_1,z_2)$ and $2 \leq k \leq N$ such that
$z_1 < - \rho$ and
%
\begin{equation}\label{eq:o11.1}
\bigl(S_k(x^k) + z\bigr) \cap
\Biggl(D^c \cup\bigcup_{j\ne k} S_j(x^j) \Biggr)
= \varnothing,
\end{equation}
then we choose the smallest $k$ with this property and let
\[
\by= (y^1,\ldots, y^N)
=(x^1, x^2 ,\ldots, x^{k-1}, x^k + z , x^{k+1},\ldots,
x^N).
\]
Then (\ref{eq:j19.2}) holds with $\delta_1 = \rho$.

\textit{Case $2$}.
In this case, we suppose
that there are no $z$ and $k$ satisfying (\ref{eq:o11.1}).
Informally speaking, this implies that the disc configuration
represented by~${\mathbf x}$ has a density bounded below by some absolute
constant $c_1>0$. We will now make this assertion precise. Consider a
square
\begin{eqnarray*}
Q &=& Q(r_1,r_2, \rho) \\
&=& \{(z_1,z_2) \in\R^2\dvtx r_1-3\rho< z_1 < r_1 +
3 \rho, r_2-3\rho< z_2 < r_2 + 3 \rho\}
\end{eqnarray*}
and assume that $Q\subset D$ and
$r_1 + 3 \rho\leq\max_{k=2,\ldots, N} x^k_1$.
We have assumed that there are no $z$ and $k$ satisfying
(\ref{eq:o11.1}) so $B((r_1 ,r_2) , \rho)$ must intersect at least
one disc $S_k(x^k)$ with $2\leq k \leq N$. It follows that $S_k(x^k)
\subset Q$ and, therefore, the area of $(\bigcup_{2\leq k \leq
N} S_k(x^k) ) \cap Q$ is greater than or equal to $c_1 := \pi/
36$ times the area of $Q$.

We have assumed that $\rho^2 N\sqrt{12}< \beta$ so $ \max_{k=2,\ldots, N} x^k_1 < \beta_1$
for some
$\beta_1 = \beta_1(\beta)< \infty$.

Let $D_1 = (0,2 \beta_1) \times(-1,1)$ and $D_2 = D_1
\setminus S_1(x^1)$.
For $0<b<2\beta_1$, define
$D_2^b := \{(x_1,x_2) \in D_2\dvtx x_1 <
b\}$. An upper\vspace*{1pt} estimate of the length of $\prt D_2^b$ is $c_2
:= \pi+ 4 + 4\beta_1$. Let $D^b_3= D^b_3(\rho) = \{x \in
D^b_2\dvtx \dist(x , \prt D^b_2)< 2\rho\}$ and $D^b_4= D^b_4(\rho) = \{x
\in(D^b_2)^c\dvtx \dist(x , \prt D^b_2)< 2\rho\}$.
We have $|D_3^b| \leq4\rho c_2 $ and $|D_4^b| \leq4\rho c_2 $ for
small~$\rho$.
Hence, the number $N_1 = N_1(b)$ of discs of radius $\rho$ in the
honeycomb packing in $D^b_2$ satisfies
\[
(|D_2^b| - 4\rho c_2 ) /\bigl( \rho^2 \sqrt{12}\bigr)
\leq N_1
\leq(|D_2^b| + 4\rho c_2) /\bigl( \rho^2 \sqrt{12}\bigr).
\]
We cannot pack $N_1$ discs in $D_2^{b_1}$
if
$|D^{b_1}_2| \leq
|D^b_2| - 9\rho c_2$. If $\rho$ is small then this condition follows
from $b - b_1 \geq9\rho c_2$.
So in any
configuration of $N_1$ discs in $D_2$, the $N_1$th disc
from the bottom
will be at most $9\rho c_2$ units below the position of the
$N_1$th disc from the bottom in the honeycomb packing of $D_2$.

If $b_2 = b + \rho\sqrt{3}$, then the honeycomb packings of $D^b_2$
and $D^{b_2}_2$ differ by one row of discs, or a part of one row. The
centers of the discs in the top row of $D^{b_2}_2$ are $\rho\sqrt{3}$
units above the centers of discs in the top row of $D^b_2$. There are
no more than $1/\rho$ discs in the top row of $D^{b_2}_2$. Consider
any $N_3$ such that $N_1(b) \leq N_3 \leq N_1(b_2)$. For any
configuration of $N_3$ discs in $D_2$, the $N_3$th particle from the
bottom will be at most $9\rho c_2+ \rho\sqrt{3}$ units below the
position of the $N_3$th particle from the bottom in the honeycomb
packing of~$D^{b_2}_2$.

Let $S_2(z^2),\ldots, S_{N}(z^{N})$ be the disc configuration
in $D_2$ obtained by taking \mbox{$N-1$} discs in the honeycomb
packing of $D_2$ with the lowest first coordinates.
We label $z^k$'s so that $z^j_1 \leq z^k_1$ if $j < k$.
Then
%
\begin{equation}\label{e:5.6}
x^k_1 \geq z^k_1 - \bigl(9 c_2+ \sqrt{3}\bigr)\rho \qquad\mbox{for }2\leq k
\leq N.
\end{equation}
Therefore, for any $2\leq N_2 \leq N$,
%
\begin{equation}\label{o12.5}
x^2_1 +\cdots+ x^{N_2}_1
\geq z^2_1 +\cdots+ z^{N_2}_1 - (N_2-1)\bigl(9 c_2+ \sqrt{3}\bigr)\rho.
\end{equation}

Recall that we assume that there do not exist $z$ and $k$ satisfying
(\ref{eq:o11.1}).

\textit{Case} 2(a).
Suppose that $\max_{k=2,\ldots, N} x^k_1 >
\max_{k=2,\ldots, N} z^k_1 + \delta/2$.
Let $\K_1 = \{2\leq k \leq N\dvtx x^k_1 > \max_{j=2,\ldots, N} z^j_1 +
\delta/4\}$.
Note that $|\K_1| \geq
c_1\delta/(8\pi\rho^2)$
for
small $\rho$ because $x^k$'s represent a configuration with density
bounded below by $c_1$. We have for small $\rho$,
\[
\sum_{k\in\K_1} x^k_1 - \sum_{k\in\K_1} z^k_1
\geq|\K_1|   \delta/4
\geq c_1\delta^2/(32\pi\rho^2) .
\]
Let $N_2 = N - |\K_1|$. We apply (\ref{o12.5}) to the discs
corresponding to $1\leq k \leq N$, $k\notin\K_1$ to obtain
\[
\sum_{2\leq k \leq N, k\notin\K_1} x^k_1
- \sum_{2\leq k \leq N, k\notin\K_1} z^k_1
\geq- (N_2-1)\bigl(9 c_2+ \sqrt{3}\bigr)\rho.
\]
Combining both estimates, we see that, for small $\rho$,
\begin{eqnarray*}
\sum_{2\leq k \leq N} x^k_1
- \sum_{2\leq k \leq N} z^k_1
&\geq& c_1\delta^2/
(32\pi\rho^2)
-(N_2-1)\bigl(9 c_2+ \sqrt{3}\bigr)\rho\\
&\geq&
c_1\delta^2/ (32\pi\rho^2) -
\bigl(\beta/\bigl(\rho^2 \sqrt{12}\bigr)-1\bigr) \bigl(9 c_2+ \sqrt{3}\bigr)\rho\\
&\geq&
c_1\delta^2/ (64\pi\rho^2) .
\end{eqnarray*}
Hence, for small $\rho>0$, (\ref{eq:j19.2}) holds with $y^1 =
x^1$, $y^k = z^k$ for $2\leq k \leq N$ and $\delta_1 =
c_1\delta^2/
(64\pi\rho^2)$.
Note that
$\by\in\ol\bD$
because $S_2(z^2),\ldots, S_{N}(z^{N})$
is a part of the honeycomb\vspace*{1pt} packing of $D_2$, so these discs are
disjoint and they are disjoint with $S_1(x^1)$.

\textit{Case} 2(b).
Next suppose that
%
\begin{equation}\label{o13.3}
\max_{k=2,\ldots, N} x^k_1 \leq\max_{k=2,\ldots, N} z^k_1 + \delta/2.
\end{equation}
Recall $\gamma$ from the statement of the theorem. We can
assume without loss of generality that $\gamma\in(0,1)$. If
$\rho$ is small then we can find $y^1_1 \in(x^1_1 +
\delta(1-\gamma)/4 , x^1_1 + \delta/4)$ such that the line
$M := \{(u_1,u_2)\dvtx u_1 = (x^1_1+y^1_1)/2\}$
is a line of symmetry for the honeycomb packing ${\mathcal S}$. Let $y^1 =
(y^1_1, x^1_2)$. Note that $S_1(y^1) \setminus S_1(x^1)$ is
``filled'' with the discs from the family $S_2(z^2),\ldots,
S_{N}(z^{N})$ when $\rho$ is small because, in view of
(\ref{eq:o9.10}) and (\ref{o13.3}),
\[
\max_{k=2,\ldots, N} z^k_1 \geq\max_{k=2,\ldots, N} x^k_1 - \delta/2
> x^1_1 + 1/2 + \delta- \delta/2
=( y^1_1 + \delta/4 + 1/2) +\delta/4.
\]
Let $\M\dvtx \R^2 \to\R^2$ be the symmetry with respect to\vspace*{1pt} $M$.
Let $\K_2 = \{2\leq k \leq N\dvtx S_k(z^k) \cap S_1(y^1) \ne
\varnothing\}$. For $k\in\K_2$, let $y^k = \M(z^k)$.
For all
other $k$, let $y^k = z^k$.
Since $\{S_2(z^2),\ldots, S_{N}(z^{N})\}$
is a part of the honeycomb packing of $D_2=D_1\setminus S_1(x^1)$,
$\{S_2(y^2),\ldots, S_{N}(y^{N})\}$
is a part of the honeycomb packing of $\R^2\setminus S_1(y^1)$.
Since any disc in ${\mathcal S}$ that intersects
$S_1(x_1)$ has to be in
the positive half space $\{(\xi_1, \xi_2)\in\R^2\dvtx \xi_1>0\}$,
$\{S_2(y^2),\ldots, S_{N}(y^{N})\}$
is in\vspace*{1pt} fact a~part of the honeycomb packing of $D_1\setminus S_1(y^1)$.
Therefore,
$\by\in\ol\bD$.

The next part of our argument is best explained using physical
intuition. Suppose that all discs $S_k$ are made of material
with mass density 1 and they are in gravitational field with
constant acceleration 1 in the negative direction along the
first axis. When we move disc $S_1(x^1)$ to the new position at
$S_1(y^1)$, then we do $(y^1_1 - x^1_1)\pi/4$ units of work,
which is at least $(\delta(1-\gamma)/4)\pi/4$. We can imagine
that the mass in $S_1(x^1) \cap S_1(y^1)$ does not move and we
only move the mass in $S_1(y^1) \setminus S_1(x^1)$ to its
symmetric image $S_1(x^1)\setminus S_1(y^1)$ under $\M$. When
$\rho$ is very small, the discs $S_k(z^k)$, $k\in\K_2$, have
total mass arbitrarily close to $(\pi/\sqrt{12}) |S_1(y^1)
\setminus S_1(x^1)|$, uniformly spread over $S_1(y^1) \setminus
S_1(x^1)$. Hence,\vspace*{1pt} the amount of work needed to move all discs
$S_k(z^k)$ to $S_k(y^k)$ for $k\in\K_2$ is negative and
smaller than $-(\pi/\sqrt{12})(\delta(1-2\gamma)/4)\pi/4$ for
small $\rho$. In other words,
\[
\sum_{k\in\K_2} (y^k_1 - z^k_1) \pi\rho^2
\leq-\bigl(\pi/\sqrt{12}\bigr)\bigl(\delta(1-2\gamma)/4\bigr)\pi/4 .
\]

Recall the assumption that $\gamma_1 \leq
(\pi/(4\sqrt{12})-\gamma)/ \rho^2$. We have $y^1_1 \leq x^1_1 +
\delta/4$ so
\[
\gamma_1 (y^1_1 - x^1_1) \leq\gamma_1 \delta/4
\leq\bigl(\bigl(\pi/\bigl(4\sqrt{12}\bigr)-\gamma\bigr)/ \rho^2\bigr) \delta/4.
\]
Combining the last two estimates we obtain
%
\begin{eqnarray} \label{o14.1}\quad
&&\gamma_1 (y^1_1 - x^1_1) +
\sum_{k\in\K_2} (y^k_1 - z^k_1) \nonumber\\
&&\qquad\leq-\bigl(\pi/\sqrt{12}\bigr)\bigl(\delta(1-2\gamma)/4\bigr)(\pi/4)/(\pi\rho^2)
+\bigl(\bigl(\pi/\bigl(4\sqrt{12}\bigr)-\gamma\bigr)/ \rho^2\bigr) \delta/4\\
&&\qquad\leq-\gamma\delta/(12 \rho^2).\nonumber
\end{eqnarray}
Recall that $N\leq\beta/(\rho^2 \sqrt{12})$
and let $\delta_1 = \gamma\delta/(48 \rho^2)$. We apply (\ref{o12.5})
and~(\ref{o14.1}) to see that for small $\rho>0$,
\begin{eqnarray*}
&&\gamma_1 x^1_1 + x^2_1 +\cdots+ x^N_1\\[2pt]
&&\qquad\geq\gamma_1 x^1_1 + z^2_1 +\cdots+ z^N_1 -(N-1)\bigl(9 c_2 + \sqrt{3}\bigr)
\rho\\[2pt]
&&\qquad\geq\gamma_1 x^1_1 + z^2_1 +\cdots+ z^N_1 -\bigl(9 c_2 + \sqrt{3}\bigr) \rho
\beta/\bigl(\rho^2 \sqrt{12}\bigr)\\[2pt]
&&\qquad= \gamma_1 x^1_1 + \sum_{k\in\K_2} z^k_1
+ \sum_{2\leq k \leq N, k\notin\K_2}
z^k_1 -\beta\bigl(9 c_2 + \sqrt{3}\bigr) /\bigl(\rho\sqrt{12}\bigr)\\[2pt]
&&\qquad= \gamma_1 x^1_1 + \sum_{k\in\K_2} z^k_1
+ \sum_{2\leq k \leq N, k\notin\K_2}
y^k_1 -\beta\bigl(9 c_2 + \sqrt{3}\bigr) /\bigl(\rho\sqrt{12}\bigr)\\[2pt]
&&\qquad> \gamma_1 y^1_1 + \sum_{k\in\K_2} y^k_1 + 2\delta_1
+ \sum_{2\leq k \leq N, k\notin\K_2}
y^k_1 -\beta\bigl(9 c_2 + \sqrt{3}\bigr) /\bigl(\rho\sqrt{12}\bigr)\\[2pt]
&&\qquad= \gamma_1 y^1_1 + y^2_1 +\cdots+ y^N_1 +\gamma\delta/(24 \rho^2)
-\beta\bigl(9 c_2 + \sqrt{3}\bigr) /\bigl(\rho\sqrt{12}\bigr)\\
&&\qquad> \gamma_1 y^1_1 + y^2_1 +\cdots+ y^N_1 + \delta_1.
\end{eqnarray*}
Hence, condition (\ref{eq:j19.2}) is satisfied. This completes
the proof of part (i) of the theorem.

(ii) The second part of the theorem can be proved just like the
first part. The proof is identical up to (\ref{o13.3}). In the
part\vspace*{1pt} following (\ref{o13.3}), all we have to do is to take
$y^1_1 \in(x^1_1 - \delta/4 , x^1_1 - \delta(1-\gamma)/4)$
instead of $y^1_1 \in(x^1_1 + \delta(1-\gamma)/4 , x^1_1 +
\delta/4)$, because in this part we want to move $S_1(x^1)$
down, not up. We leave the details to the reader.
\end{pf*}

\subsection{Inert objects}

We will model inertia of objects $S_k$ by changing the rules of
reflection. When two different objects $S_j$ and $S_k$ reflect
from each other, they will no longer receive the same amount of
push to keep them apart. One way to formalize this idea is to
say that when the process $\bX$ hits the boundary of $\bD$ at
the time when $S_j$ hits $S_k$, then $\bX$ is not reflected
normally but it is subject to oblique reflection. The drift
$a_k$ will not be assumed to be related to the value of inertia
for $S_k$. In other words, $a_k$'s may model forces which have
strength dependent on factors other than the inertial mass.

We will assume that the standard deviation for oscillations of
$S_k$ is inversely proportional to the inertia of $S_k$. The
reason for this assumption is purely technical. The assumption
allows us to transform the problem to the model covered by
Theorem \ref{th:j14.2}. In general, it is not easy to find an
explicit formula for the stationary distribution of reflected
Brownian motion with oblique reflection (even if the process
has no drift).

Next, we formalize the ideas stated above. Let $m_k>0$ be the
parameter representing the inertia for $S_k$. Let
\begin{eqnarray*}
\T({\mathbf x}) &=& (m_1 x^1,\ldots, m_N x^N),  \qquad {\mathbf x}\in\R
^{Nd},\\
\wt\bD&=& \T(\bD),\qquad
\wt\bX_t = \T(\bX)_t .
\end{eqnarray*}
We assume that $\wt\bX$ is reflected Brownian motion in $\wt
\bD$ (with the normal reflection), with drift
\[
\wt\bv= (
(-a_1 m_1, 0,\ldots, 0),\ldots, (-a_N m_N, 0,\ldots, 0) ).
\]
Let $\wt\mu$ denote the stationary distribution for $\wt\bX$
and let $\mu'$ be the corresponding stationary distribution for
$\bX$. Note that under $\mu'$, the quadratic variation process
for $X^k$ is $t/m_k^2$.

In the present example, if two objects $S_j$ and $S_k$ reflect
from each other, then the infinitesimal displacement of $S_j$ is
$m_k/m_j$ times the infinitesimal displacement of~$S_k$.

We will illustrate the effect of inertia using the same model
as in the Section~\ref{sec:float}.
\begin{theorem}\label{theor}
Suppose that $d=2$, $D$ is as in Example \ref{ex:j16.2}\textup{(ii)},
$S_1=B(0$, $1/2)$ and $S_k= B(0,\rho)$ for $k=2,\ldots,N$, where
$\rho<1/2$. Assume that $a_2 =a_3 =\cdots= a_N$, $m_1>1$,
$m_2=\cdots= m_N = 1$ and $\bX$ has distribution $\mu'$.

\begin{longlist}
\item
For any $p,\delta,\gamma>0$, there exists $\rho_0>0$ and
$N_0<\infty$ such that, for $\rho\in(0,\rho_0)$ and $N\geq N_0$ which
satisfy the
condition $\rho^2 N\sqrt{12}> 2 - \pi/4$, there exists $a_0>0$
such that if $a_2\geq a_0$ and $a_1/a_2 := \gamma_1 \leq
(\pi/(4\sqrt{12})-\gamma)/ (\rho^2 m_1)$, then
\[
\mu' \Bigl( X^1_1 < \max_{k=2,\ldots, N} X^k_1 -1/2-\delta
\Bigr) < p.
\]

\item
For any $p,\delta,\gamma>0$, there exists $\rho_0>0$ and
$N_0<\infty$ such that, for $\rho\in(0,\rho_0)$ and $N\geq N_0$ which
satisfy the
condition $\rho^2 N\sqrt{12}> 2 - \pi/4$, there exists $a_0>0$
such that if $a_2\geq a_0$ and $a_1/a_2 := \gamma_2 \geq
(\pi/(4\sqrt{12})+\gamma)/ (\rho^2m_1)$, then
\[
\mu' ( X^1_1 > 1/2 + \delta) < p.
\]
\end{longlist}
\end{theorem}

The theorem says that the higher is inertia $m_1 $, the lower is
the critical drift $a_1$ that makes the disc $S_1$ sink. We
note parenthetically that behavior of real particulate matter
can be paradoxical, unlike in our example. Large and heavy
particles may move to the top of a mixture of small and large
particles under some circumstances (see \cite{RSPS}).
\begin{pf*}{Proof of Theorem \ref{theor}}
We can use the same reasoning as in the proof of Theorem
\ref{th:j20.1} but with a twist. Theorem \ref{th:j18.3}(i)
must be applied to the process $\wt\bX$ under $\wt\mu$, so we
have to analyze $m_1 a_1 x^1_1 + a_2 x^2_1 +\cdots+ a_N x^N_1
$ rather than $ a_1 x^1_1 + a_2 x^2_1 +\cdots+ a_N x^N_1 $.
Hence, $\gamma_1$ in (\ref{eq:j19.2}) must have an extra factor
of $1/m_1$. The constant $\gamma_1$ in the present theorem has
that extra factor, as compared to the constant $\gamma_1$ in
Theorem \ref{th:j20.1}. With this change, the proof of Theorem
\ref{th:j20.1} applies in the present context.
\end{pf*}

\section*{Acknowledgments}
We are grateful to Amir Dembo, Persi Diaconis, Joel Lebowitz,
Charles Radin, Benedetto Scoppola and Jason Swanson for very useful advice.
We thank the anonymous referees for helpful comments.



%
\printaddresses


\begin{thebibliography}{26}

\bibitem{BB}
\begin{barticle}[mr]
\bauthor{\bsnm{Bass},~\bfnm{Richard~F.}\binits{R.~F.}} \AND
  \bauthor{\bsnm{Burdzy},~\bfnm{Krzysztof}\binits{K.}}
(\byear{2008}).
\btitle{On pathwise uniqueness for reflecting {B}rownian motion in
  {$C\sp{1+\gamma}$} domains}.
\bjournal{Ann. Probab.}
\bvolume{36}
\bpages{2311--2331}.
\bid{doi={10.1214/08-AOP390}, issn={0091-1798}, mr={2478684}}
\end{barticle}
\endbibitem

\bibitem{BBC}
\begin{barticle}[mr]
\bauthor{\bsnm{Bass},~\bfnm{Richard~F.}\binits{R.~F.}},
  \bauthor{\bsnm{Burdzy},~\bfnm{Krzysztof}\binits{K.}} \AND
  \bauthor{\bsnm{Chen},~\bfnm{Zhen-Qing}\binits{Z.-Q.}}
(\byear{2005}).
\btitle{Uniqueness for reflecting {B}rownian motion in lip domains}.
\bjournal{Ann. Inst. H. Poincar\'e Probab. Statist.}
\bvolume{41}
\bpages{197--235}.
\bid{doi={10.1016/j.anihpb.2004.06.001}, issn={0246-0203}, mr={2124641}}
\end{barticle}
\endbibitem

\bibitem{BC}
\begin{barticle}[mr]
\bauthor{\bsnm{Burdzy},~\bfnm{Krzysztof}\binits{K.}} \AND
  \bauthor{\bsnm{Chen},~\bfnm{Zen-Qing}\binits{Z.-Q.}}
(\byear{1998}).
\btitle{Weak convergence of reflecting {B}rownian motions}.
\bjournal{Electron. Comm. Probab.}
\bvolume{3}
\bpages{29--33 (electronic)}.
\bid{issn={1083-589X}, mr={1625707}}
\end{barticle}
\endbibitem

\bibitem{BPS}
\begin{barticle}[auto:STB|2011-03-03|12:04:44]
\bauthor{\bsnm{Burdzy},~\bfnm{K.}\binits{K.}},
  \bauthor{\bsnm{Pal},~\bfnm{S.}\binits{S.}} \AND
  \bauthor{\bsnm{Swanson},~\bfnm{J.}\binits{J.}}
(\byear{2010}).
\btitle{Crowding of Brownian spheres}.
\bjournal{ALEA Lat. Am. J. Probab. Math. Stat.}
\bvolume{7}
\bpages{193--205}.
\end{barticle}
\endbibitem

\bibitem{C1}
\begin{barticle}[mr]
\bauthor{\bsnm{Chen},~\bfnm{Zhen~Qing}\binits{Z.~Q.}}
(\byear{1993}).
\btitle{On reflecting diffusion processes and {S}korokhod decompositions}.
\bjournal{Probab. Theory Related Fields}
\bvolume{94}
\bpages{281--315}.
\bid{doi={10.1007/BF01199246}, issn={0178-8051}, mr={1198650}}
\end{barticle}
\endbibitem

\bibitem{C2}
\begin{barticle}[mr]
\bauthor{\bsnm{Chen},~\bfnm{Zhen-Qing}\binits{Z.-Q.}}
(\byear{1996}).
\btitle{Reflecting {B}rownian motions and a deletion result for {S}obolev
  spaces of order {$(1,2)$}}.
\bjournal{Potential Anal.}
\bvolume{5}
\bpages{383--401}.
\bid{doi={10.1007/BF00275474}, issn={0926-2601}, mr={1401073}}
\end{barticle}
\endbibitem

\bibitem{CFKZ}
\begin{barticle}[mr]
\bauthor{\bsnm{Chen},~\bfnm{Z.~Q.}\binits{Z.~Q.}},
  \bauthor{\bsnm{Fitzsimmons},~\bfnm{P.~J.}\binits{P.~J.}},
  \bauthor{\bsnm{Kuwae},~\bfnm{K.}\binits{K.}} \AND
  \bauthor{\bsnm{Zhang},~\bfnm{T.~S.}\binits{T.~S.}}
(\byear{2008}).
\btitle{Perturbation of symmetric {M}arkov processes}.
\bjournal{Probab. Theory Related Fields}
\bvolume{140}
\bpages{239--275}.
\bid{doi={10.1007/s00440-007-0065-2}, issn={0178-8051}, mr={2357677}}
\end{barticle}
\endbibitem

\bibitem{CFTYZ}
\begin{barticle}[mr]
\bauthor{\bsnm{Chen},~\bfnm{Z.~Q.}\binits{Z.~Q.}},
  \bauthor{\bsnm{Fitzsimmons},~\bfnm{P.~J.}\binits{P.~J.}},
  \bauthor{\bsnm{Takeda},~\bfnm{M.}\binits{M.}},
  \bauthor{\bsnm{Ying},~\bfnm{J.}\binits{J.}} \AND
  \bauthor{\bsnm{Zhang},~\bfnm{T.~S.}\binits{T.~S.}}
(\byear{2004}).
\btitle{Absolute continuity of symmetric {M}arkov processes}.
\bjournal{Ann. Probab.}
\bvolume{32}
\bpages{2067--2098}.
\bid{doi={10.1214/009117904000000432}, issn={0091-1798}, mr={2073186}}
\end{barticle}
\endbibitem

\bibitem{CS}
\begin{bbook}[mr]
\bauthor{\bsnm{Conway},~\bfnm{J.~H.}\binits{J.~H.}} \AND
  \bauthor{\bsnm{Sloane},~\bfnm{N.~J.~A.}\binits{N.~J.~A.}}
(\byear{1999}).
\btitle{Sphere Packings, Lattices and Groups},
\bedition{3rd} ed.
\bseries{Grundlehren der Mathematischen Wissenschaften [Fundamental Principles
  of Mathematical Sciences]}
\bvolume{290}.
\bpublisher{Springer}, \baddress{New York}.
\bid{mr={1662447}}
\end{bbook}
\endbibitem

\bibitem{DLM}
\begin{bmisc}[mr]
\bauthor{\bsnm{Diaconis},~\bfnm{Persi}\binits{P.}},
\bauthor{\bsnm{Lebeau},~\bfnm{Gilles}\binits{G.}} \AND
\bauthor{\bsnm{Michel},~\bfnm{L.}\binits{L.}}
(\byear{2009}).
\bhowpublished{Geometric analysis for the
Metropolis algorithm on Lipschitz domains.
\textit{Inventiones Mathematicae}
\href{http://dx.doi.org/10.1007/s00222-010-0303-6}{DOI:10.1007/s00222-010-0303-6}.}
\end{bmisc}
\endbibitem

\bibitem{DM}
\begin{barticle}[mr]
\bauthor{\bsnm{Dieker},~\bfnm{A.~B.}\binits{A.~B.}} \AND
  \bauthor{\bsnm{Moriarty},~\bfnm{J.}\binits{J.}}
(\byear{2009}).
\btitle{Reflected {B}rownian motion in a wedge: Sum-of-exponential stationary
  densities}.
\bjournal{Electron. Comm. Probab.}
\bvolume{14}
\bpages{1--16}.
\bid{issn={1083-589X}, mr={2472171}}
\end{barticle}
\endbibitem

\bibitem{DR}
\begin{barticle}[mr]
\bauthor{\bsnm{Dupuis},~\bfnm{Paul}\binits{P.}} \AND
  \bauthor{\bsnm{Ramanan},~\bfnm{Kavita}\binits{K.}}
(\byear{2002}).
\btitle{A time-reversed representation for the tail probabilities of stationary
  reflected {B}rownian motion}.
\bjournal{Stochastic Process. Appl.}
\bvolume{98}
\bpages{253--287}.
\bid{doi={10.1016/S0304-4149(01)00151-X}, issn={0304-4149}, mr={1887536}}
\end{barticle}
\endbibitem

\bibitem{EK}
\begin{bbook}[mr]
\bauthor{\bsnm{Ethier},~\bfnm{Stewart~N.}\binits{S.~N.}} \AND
  \bauthor{\bsnm{Kurtz},~\bfnm{Thomas~G.}\binits{T.~G.}}
(\byear{1986}).
\btitle{Markov Processes: Characterization and Convergence}.
\bpublisher{Wiley}, \baddress{New York}.
\bid{doi={10.1002/9780470316658}, mr={0838085}}
\end{bbook}
\endbibitem

\bibitem{FT}
\begin{bbook}[mr]
\bauthor{\bsnm{Fejes~T{\'o}th},~\bfnm{L.}\binits{L.}}
(\byear{1953}).
\btitle{Lagerungen in der {E}bene, Auf der {K}ugel und Im {R}aum}.
\bseries{Die Grundlehren der Mathematischen Wissenschaften in
  Einzeldarstellungen Mit Besonderer Ber\"ucksichtigung der Anwendungsgebiete
  Band}
\bvolume{LXV}.
\bpublisher{Springer}, \baddress{Berlin}.
\bid{mr={0057566}}
\end{bbook}
\endbibitem

\bibitem{F}
\begin{barticle}[mr]
\bauthor{\bsnm{Fradon},~\bfnm{Myriam}\binits{M.}}
(\byear{2010}).
\btitle{Brownian dynamics of globules}.
\bjournal{Electron. J. Probab.}
\bvolume{15}
\bpages{142--161}.
\bid{issn={1083-6489}, mr={2594875}}
\bptnote{check year}
\end{barticle}
\endbibitem

\bibitem{Fu}
\begin{bincollection}[mr]
\bauthor{\bsnm{Fukushima},~\bfnm{Masatoshi}\binits{M.}}
(\byear{1983}).
\btitle{Capacitary maximal inequalities and an ergodic theorem}.
In \bbooktitle{Probability Theory and Mathematical Statistics ({T}bilisi,
  1982)}.
\bseries{Lecture Notes in Math.}
\bvolume{1021}
\bpages{130--136}.
\bpublisher{Springer}, \baddress{Berlin}.
\bid{doi={10.1007/BFb0072909}, mr={0735979}}
\end{bincollection}
\endbibitem

\bibitem{FOT}
\begin{bbook}[mr]
\bauthor{\bsnm{Fukushima},~\bfnm{Masatoshi}\binits{M.}},
  \bauthor{\bsnm{{\=O}shima},~\bfnm{Y{\=o}ichi}\binits{Y.}} \AND
  \bauthor{\bsnm{Takeda},~\bfnm{Masayoshi}\binits{M.}}
(\byear{1994}).
\btitle{Dirichlet Forms and Symmetric {M}arkov Processes}.
\bseries{de Gruyter Studies in Mathematics}
\bvolume{19}.
\bpublisher{de Gruyter}, \baddress{Berlin}.
\bid{mr={1303354}}
\end{bbook}
\endbibitem

\bibitem{H}
\begin{barticle}[mr]
\bauthor{\bsnm{Hales},~\bfnm{Thomas~C.}\binits{T.~C.}}
(\byear{2006}).
\btitle{Historical overview of the {K}epler conjecture}.
\bjournal{Discrete Comput. Geom.}
\bvolume{36}
\bpages{5--20}.
\bid{doi={10.1007/s00454-005-1210-2}, issn={0179-5376}, mr={2229657}}
\end{barticle}
\endbibitem

\bibitem{HW}
\begin{barticle}[mr]
\bauthor{\bsnm{Harrison},~\bfnm{J.~M.}\binits{J.~M.}} \AND
  \bauthor{\bsnm{Williams},~\bfnm{R.~J.}\binits{R.~J.}}
(\byear{1987}).
\btitle{Multidimensional reflected {B}rownian motions having exponential
  stationary distributions}.
\bjournal{Ann. Probab.}
\bvolume{15}
\bpages{115--137}.
\bid{issn={0091-1798}, mr={0877593}}
\end{barticle}
\endbibitem

\bibitem{LS}
\begin{barticle}[mr]
\bauthor{\bsnm{Lions},~\bfnm{P.~L.}\binits{P.~L.}} \AND
  \bauthor{\bsnm{Sznitman},~\bfnm{A.~S.}\binits{A.~S.}}
(\byear{1984}).
\btitle{Stochastic differential equations with reflecting boundary conditions}.
\bjournal{Comm. Pure Appl. Math.}
\bvolume{37}
\bpages{511--537}.
\bid{doi={10.1002/cpa.3160370408}, issn={0010-3640}, mr={0745330}}
\end{barticle}
\endbibitem

\bibitem{MR}
\begin{bbook}[mr]
\bauthor{\bsnm{Ma},~\bfnm{Zhi~Ming}\binits{Z.~M.}} \AND
  \bauthor{\bsnm{R{\"o}ckner},~\bfnm{Michael}\binits{M.}}
(\byear{1992}).
\btitle{Introduction to the Theory of (nonsymmetric) {D}irichlet Forms}.
\bpublisher{Springer}, \baddress{Berlin}.
\bid{mr={1214375}}
\end{bbook}
\endbibitem

\bibitem{O}
\begin{barticle}[mr]
\bauthor{\bsnm{Osada},~\bfnm{Hirofumi}\binits{H.}}
(\byear{1996}).
\btitle{Dirichlet form approach to infinite-dimensional {W}iener processes with
  singular interactions}.
\bjournal{Comm. Math. Phys.}
\bvolume{176}
\bpages{117--131}.
\bid{issn={0010-3616}, mr={1372820}}
\end{barticle}
\endbibitem

\bibitem{R}
\begin{bmisc}[auto:STB|2011-03-03|12:04:44]
\bauthor{\bsnm{Radin},~\bfnm{C.}\binits{C.}}
\bhowpublished{The ``most probable'' sphere packings, and models of soft
  matter. Univ. Texas. Available at
  \url{http://www.ma.utexas.edu/users/radin/spheres.html}.}
\end{bmisc}
\endbibitem

\bibitem{RSPS}
\begin{barticle}[mr]
\bauthor{\bsnm{Rosato},~\bfnm{Anthony}\binits{A.}},
  \bauthor{\bsnm{Strandburg},~\bfnm{Katherine~J.}\binits{K.~J.}},
  \bauthor{\bsnm{Prinz},~\bfnm{Friedrich}\binits{F.}} \AND
  \bauthor{\bsnm{Swendsen},~\bfnm{Robert~H.}\binits{R.~H.}}
(\byear{1987}).
\btitle{Why the {B}razil nuts are on top: Size segregation of particulate
  matter by shaking}.
\bjournal{Phys. Rev. Lett.}
\bvolume{58}
\bpages{1038--1040}.
\bid{doi={10.1103/PhysRevLett.58.1038}, issn={0031-9007}, mr={0879723}}
\end{barticle}
\endbibitem

\bibitem{RDXRC}
\begin{barticle}[auto:STB|2011-03-03|12:04:44]
\bauthor{\bsnm{Rutgers},~\bfnm{M.~A.}\binits{M.~A.}},
  \bauthor{\bsnm{Dunsmuir},~\bfnm{J.~H.}\binits{J.~H.}},
  \bauthor{\bsnm{Xue},~\bfnm{J.~Z.}\binits{J.~Z.}},
  \bauthor{\bsnm{Russel},~\bfnm{W.~B.}\binits{W.~B.}} \AND
  \bauthor{\bsnm{Chaikin},~\bfnm{P.~M.}\binits{P.~M.}}
(\byear{1996}).
\btitle{Measurement of the hard-sphere equation of state using screened charged
  polystyrene colloids}.
\bjournal{Phys. Rev. B}
\bvolume{53}
\bpages{5043--5046}.
\end{barticle}
\endbibitem

\end{thebibliography}
\end{document}